\definecolor{vegasgold}{rgb}{0.77, 0.7, 0.35}
\definecolor{darkgoldenrod}{rgb}{0.72, 0.53, 0.04}
\definecolor{gold(metallic)}{rgb}{0.83, 0.69, 0.22}
\DeclareFontFamily{U}{wncy}{}
\DeclareFontShape{U}{wncy}{m}{n}{<->wncyr10}{}
\DeclareSymbolFont{mcy}{U}{wncy}{m}{n}
\DeclareMathSymbol{\Sh}{\mathord}{mcy}{"58}
\newtheorem{theorem}{Theorem}[section]
\newtheorem{lemma}[theorem]{Lemma}
\newtheorem*{theorem*}{Theorem}
\newtheorem*{ass*}{Assumption}
\newtheorem{definition}[theorem]{Definition}
\newtheorem{proposition}[theorem]{Proposition}
\newcommand{\cF}{\mathcal{F}}
\newcommand{\fl}{\mathfrak{l}}
\newcommand{\Z}{\mathbb{Z}}
\newcommand{\p}{\mathfrak{p}}
\newcommand{\Q}{\mathbb{Q}}
\newcommand{\F}{\mathbb{F}}
\newcommand{\cL}{\mathcal{L}}
\newcommand{\cC}{\mathcal{C}}
\newcommand{\cO}{\mathcal{O}}
\newcommand{\op}[1]{\operatorname{#1}}
\numberwithin{equation}{section}
\begin{document}

\title[Fine Selmer groups of Drinfeld modules]{Iwasawa theory of fine Selmer groups associated to Drinfeld modules}

\author[A.~Ray]{Anwesh Ray}
\address[Ray]{Chennai Mathematical Institute, H1, SIPCOT IT Park, Kelambakkam, Siruseri, Tamil Nadu 603103, India}
\email{anwesh@cmi.ac.in}

\keywords{}
\subjclass[2020]{11R23, 11G09}

\maketitle

\begin{abstract}
 Let $q$ be a prime power and $F=\mathbb{F}_q(T)$ be the rational function field over $\mathbb{F}_q$, the field with $q$ elements. Let $\phi$ be a Drinfeld module over $F$ and $\p$ be a non-zero prime ideal of $A:=\mathbb{F}_q[T]$. Over the constant $\mathbb{Z}_p$-extension of $F$, we introduce the fine Selmer group associated to the $\mathfrak{p}$-primary torsion of $\phi$. We show that it is a cofinitely generated module over $A_{\mathfrak{p}}$. This proves an analogue of Iwasawa's $\mu=0$ conjecture in this setting, and provides context for the further study of the objects that have been introduced in this article.
\end{abstract}

\section{Introduction}
\par Let $K$ be a number field and $p$ be a prime number. Denote by $\Z_p$, the ring of $p$-adic integers. A $\Z_p$-extension $K_\infty/K$ is an infinite Galois extension of $K$ such that $\op{Gal}(K_\infty/K)$ is isomorphic to $\Z_p$ as a topological group. Let $K_n/K$ be the subfield of $K_\infty$ for which $\op{Gal}(K_n/K)\simeq \Z/p^n\Z$. Denote by $\op{Cl}_p(K_n)$ the $p$-primary part of the class group of $K_n$. We write $h_p(K_n)=p^{e_n}$ to denote the cardinality of $\op{Cl}_p(K_n)$. The fields $\{K_n\}$ form a $\Z_p$-tower of extensions 
\[K=K_0\subset K_1\subset \dots \subset K_n \subset K_{n+1}\subset \dots ,\]
Iwasawa showed that there are constants $\mu, \lambda\in \Z_{\geq 0}$ and $\nu\in \Z$ such that 
\[e_n=p^n \mu+ n \lambda +\nu\] for $n\gg 0$ (cf. \cite{iwasawa1973zl} or \cite[Chapter 13]{washington}). The quantities $\mu$ and $\lambda$ are the $\mu$ and $\lambda$-invariants associated to $K_\infty/K$ respectively. The \emph{cyclotomic $\Z_p$-extension} $K_{\op{cyc}}/K$ is the unique $\Z_p$-extension of $K$ which is contained in $K(\mu_{p^\infty})$. Iwasawa conjectured that for the cyclotomic $\Z_p$-extension of any number field $K$, the $\mu$-invariant vanishes. When $K/\Q$ is an abelian extension, the conjecture was resolved by Ferrero and Washington \cite{FW}.
\par Mazur (cf. \cite{mazur1972rational}) introduced a generalization of Iwasawa's theory in the context of Selmer groups associated to abelian varieties defined over a number field with good ordinary reduction at the primes above $p$. The Selmer groups are defined by local conditions prescribed the images of (local) Kummer maps. A closer analogy holds with the class group, when one considers the \emph{fine Selmer group}. This is the Selmer group for which the local conditions at primes of bad reduction are defined by the strict local condition. In this context, there is an analogue of the $\mu=0$ conjecture, that was formulated by Coates and Sujatha (cf. \cite[Conjecture A]{coates2005fine}). 

\par In this article, we study function field analogues of questions posed for elliptic curves over number fields. Drinfeld modules natural objects to consider over function fields that give rise to compatible systems of Galois representations. What is important is that for Drinfeld modules of \emph{generic characteristic} these representations cut out separable extensions and thus it is possible to define an analogue of the fine Selmer group in this context. Let $q$ be a prime power and $\F_q$ be the field with $q$ elements. Denote by $A$ the polynomial ring $\F_q[T]$ and $F$ its fraction field. Let $\phi$ be a Drinfeld module over $F$, and $\p$ be a non-zero prime ideal of $A$. Denote by $\cO$ the completion of $A$ at $\p$, and set $\mathcal{K}:=\op{Frac}\cO$. Then, its module of $\p$-primary torsion points $\phi[\p^\infty]$ is isomorphic to $(\mathcal{K}/\cO)^r$, where $r$ is the rank of $\phi$. 
\par Let $F^{\op{nr}}=\bar{\F}_q(T)$ be the maximal unramified extension of $F$, and choose isomorphisms 
    \[\op{Gal}(F^{\op{nr}}/F)\xrightarrow{\sim} \op{Gal}(\bar{\F}_q/\F_q)\xrightarrow{\sim} \widehat{\Z}.\] Denote by $\cF$ the unique $\Z_p$-extension of $F$ that is contained in $F^{\op{nr}}$. This $\Z_p$-extension is given by $\cF=\kappa (T)$, where $\kappa\subset \bar{\F}_q$ is a $\Z_p$-extension of $\F_q$. It is for this reason that $\cF$ is referred to as the \emph{constant $\Z_p$-extension} of $F$. It is of significance that any prime of $F$ is finitely decomposed in the infinite extension $\cF$. Let $\Gamma$ denote the Galois group $\Gamma:=\op{Gal}(\cF/F)$ and $\Lambda_{\cO}:=\cO\llbracket \Gamma \rrbracket$ be the \emph{Iwasawa algebra} over $\cO$. We define the fine Selmer group $\op{Sel}_{0}(\phi[\p^\infty]/\cF)$ to be the subgroup of $H^1(\op{Gal}(F^{\op{sep}}/\cF), \phi[\p^\infty])$ consisting of cohomology classes that are trivial when restricted to any prime of $\cF$. This Selmer group is a module over the Iwasawa algebra $\Lambda_{\cO}$ and is the precise analogue of the Selmer group considered in \cite{coates2005fine}. We study the algebraic structure of $\op{Sel}_{0}(\phi[\p^\infty]/\cF)$ and show that it is cofinitely generated and cotorsion over $\Lambda_{\cO}$. We define the Iwasawa $\mu$ and $\lambda$ invariants associated to $\op{Sel}_{0}(\phi[\p^\infty]/\cF)$, denoted $\mu_{\p}(\phi)$ and $\lambda_{\p}(\phi)$. We show that $\mu_{\p}(\phi)=0$ and that $\op{Sel}_{0}(\phi[\p^\infty]/\cF)$ is cofinitely generated as a $\cO$-module, and its $\cO$-corank is equal to $\lambda_{\p}(\phi)$. Moreover, we obtain an upper bound for the size of $\lambda_{\p}(\phi)$ in terms of the dimension of the \emph{residual fine Selmer group} $\op{Sel}_0(\phi[\p]/\cF)$ and certain local invariants associated to the primes at which $\phi$ has bad reduction and the primes $\p$ and $\infty$. Let $\F_{\p}$ denote the residue field of $\cO$. The main result is stated below.

    \begin{theorem}\label{MAIN thm}
    Let $\phi$ be a Drinfeld module over $F$ of rank $r$ and $\p$ be a non-zero prime of $A$. Denote by $S$ the set of primes $\{\p, \infty\}$ and the primes at which $\phi$ has bad reduction. Let $\cF$ be the constant $\Z_p$-extension of $F$. Then, the following assertions hold
    \begin{enumerate}
        \item \label{c1 MAIN thm} $\op{Sel}_0(\phi[\p^\infty]/\cF)$ is a cofinitely generated cotorsion $\Lambda_{\cO}$-module with $\mu_{\p}(\phi)=0$,
        \item \label{c2 MAIN thm} $\op{Sel}_0(\phi[\p^\infty]/\cF)$ is cofinitely generated as a module over $\cO$ and has corank equal to $\lambda_p(\phi)$.
        \item\label{c3 MAIN thm} Let $\op{Sel}_0(\phi[\p]/\cF)$ be the residual fine Selmer group (cf. Definition \ref{def of residual fine Selmer}). Then, we have the following bound for the $\lambda$-invariant
        \[\lambda_{\p}(\phi)\leq \op{dim}_{\F_\p}\op{Sel}_0(\phi[\p]/\cF)+\sum_{w\in S(\cF)} \dim_{\F_\p}\left(H^0(\cF_{w}, \phi[\p^\infty])\otimes_{\cO} \F_{\p}\right).\]
    \end{enumerate}
\end{theorem}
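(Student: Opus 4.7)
The plan is to reduce all three assertions to the finiteness of the residual fine Selmer group via a $\pi$-descent diagram, the key geometric input being that each prime of $F$ is finitely decomposed in $\cF$, so $S(\cF)$ is finite.

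\textbf{Descent.} Let $\pi$ be a uniformizer of $\cO$. From the short exact sequence $0\to \phi[\p]\to \phi[\p^\infty]\xrightarrow{\pi}\phi[\p^\infty]\to 0$ I would take global and local Galois cohomology over $\cF$, and assemble a commutative diagram comparing the defining complexes for $\op{Sel}_0(\phi[\p]/\cF)$ and $\op{Sel}_0(\phi[\p^\infty]/\cF)[\p]$. A snake-lemma chase, using that $\phi[\p^\infty]$ is unramified outside $S$ and that $S(\cF)$ is finite, should produce a four-term exact sequence
\[\op{Sel}_0(\phi[\p]/\cF)\to \op{Sel}_0(\phi[\p^\infty]/\cF)[\p]\to \bigoplus_{w\in S(\cF)} H^0(\cF_w,\phi[\p^\infty])\otimes_{\cO}\F_{\p},\]
after identifying $H^0(\cF_w,\phi[\p^\infty])/\pi$ with $H^0(\cF_w,\phi[\p^\infty])\otimes_{\cO}\F_{\p}$.

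\textbf{Finiteness and deduction.} I would next establish that the residual fine Selmer group $\op{Sel}_0(\phi[\p]/\cF)$ is finite-dimensional over $\F_{\p}$: since $\phi[\p]$ is a finite $\F_\p$-vector space unramified outside $S$, the group sits inside $H^1(G_{\cF,S},\phi[\p])$, and the strict local conditions at $w\in S(\cF)$ together with the tame, geometric nature of $\cF/F$ reduce the estimate to a class-group-type statement in the constant $\Z_p$-tower, where a Ferrero--Washington-style argument gives finiteness. Once this is in hand, the displayed exact sequence bounds $\dim_{\F_{\p}}\op{Sel}_0(\phi[\p^\infty]/\cF)[\p]$ by exactly the right-hand side of (3). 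Dualizing, $X:=\op{Sel}_0(\phi[\p^\infty]/\cF)^{\vee}$ has $X/\pi X$ finite, so Nakayama's lemma over the complete local ring $\cO$ shows $X$ is finitely generated over $\cO$, proving (2); this forces $X$ to be $\Lambda_{\cO}$-torsion with $\mu_{\p}(\phi)=0$, giving (1). Finally, the structure theorem identifies $\lambda_{\p}(\phi)$ with $\rank_{\cO}X$, and the elementary inequality $\rank_{\cO}X\leq \dim_{\F_{\p}}X/\pi X$ converts the dimension bound into (3).

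\textbf{Main obstacle.} The delicate point is the finiteness of $\op{Sel}_0(\phi[\p]/\cF)$. In the number-field setting this is Coates--Sujatha's ``Conjecture~A'', open in general; here one must exploit the special features of the constant extension---unramified away from $\infty$, with every place finitely decomposed---to reduce the finiteness to the torsion of a class-group-like Iwasawa module whose growth is controlled by class field theory in the tower $\cF/F$. Carrying this over from the ground field to the cohomology of $\phi[\p]$, and carefully tracking the strict local conditions at the bad primes and at $\p,\infty$, will be the main technical hurdle; the rest of the argument is then a formal descent and structure-theoretic unpacking.
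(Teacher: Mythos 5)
Your overall skeleton is the right one, and the $\pi$-descent diagram, the identification of $M[\varpi]$-finiteness with $\mu_{\p}(\phi)=0$ via the structure theorem, and the passage from $\rank_{\cO}X\leq \dim_{\F_\p}X/\varpi X$ to part (3) all line up with the paper (compare Lemma~\ref{main lemma 1}, Lemma~\ref{lemma 4.3}, and Proposition~\ref{mu equals 0 prop}). You also correctly flag the finiteness of the residual fine Selmer group as the crux. But that is exactly where your sketch stops short of an argument, and the gap is genuine.

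You write that a ``Ferrero--Washington-style argument'' applied to class groups in the tower $\cF/F$ will give finiteness, and that ``carrying this over from the ground field to the cohomology of $\phi[\p]$'' is the technical hurdle. Two problems. First, the input is not of Ferrero--Washington type: what the paper uses is Rosen's theorem (Theorem~\ref{ROSEN thm}) that $\mu=0$ for the Galois module $X_S$ of the $S$-split Hilbert class field tower over a \emph{constant} $\Z_p$-extension of a function field; this is a geometric statement (Weil bounds on the Jacobian) and is proved very differently from Ferrero--Washington, and it holds for \emph{every} function field, not only abelian ones. Second, and more importantly, you never explain how to reduce $H^1$ of the nontrivial Galois module $\phi[\p]$ to a class-group statement, and this cannot happen over $\cF$ directly since $\phi[\p]$ is not trivialized there. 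The paper's Proposition~\ref{MAIN prop} supplies precisely the missing step: pass to $L:=F(\phi[\p])$, take its constant $\Z_p$-extension $\cL=L\cdot\cF$, use inflation--restriction to split $\op{Sel}_0(\phi[\p]/\cF)$ between the finite group $H^1(\op{Gal}(\cL/\cF),\phi[\p])$ and homomorphisms $\op{Gal}(F_S/\cL)\to\F_\p^r$ which, being unramified and $S$-split, factor through $\bar X_S(\cL)=X_S(\cL)/pX_S(\cL)$; Rosen's $\mu=0$ for $\cL/L$ then makes $X_S(\cL)$ a finitely generated $\Z_p$-module, so $\bar X_S(\cL)$ is finite. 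Without the detour through $L$ and Rosen's theorem your sketch has no mechanism to turn $H^1(F_S/\cF,\phi[\p])$ into a class-group quantity, so the finiteness — and with it all three parts of the theorem — remains unproved.

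One smaller point: when you invoke Nakayama over $\cO$ from ``$X/\pi X$ finite,'' you implicitly need $X$ compact (it is, being a Pontryagin dual), and strictly speaking the paper routes this through the $\Lambda_{\cO}$-version, Lemma~\ref{finitely generated N lemma}, before descending to $\cO$-finite generation via the structure theorem. Worth stating explicitly so the logical order (first cotorsion over $\Lambda_\cO$ with $\mu=0$, then $\cO$-finite generation) is clear.
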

Thus, the equivalent of the $\mu=0$ conjecture of Coates and Sujatha is resolved for Drinfeld modules over $F$. In the bound for the $\lambda$-invariant above, the dimension of $\op{Sel}_0(\phi[\p]/\cF)$ is shown to be finite. The set of primes $S(\cF)$ is finite and the local invariants satisfy 
\[\dim_{\F_\p}\left(H^0(\cF_{w}, \phi[\p^\infty])\otimes_{\cO} \F_{\p}\right)\leq r.\] Furthermore, if $H^0(F_v, \phi[\p])=0$, then, $H^0(\cF_{w}, \phi[\p^\infty])=0$ for all primes $w|v$ (cf. Proposition \ref{H0 vanishing Prop}). Our technique leverages a theorem of Rosen, who proves $\mu=0$ for the $p$-primary class groups in a constant $\Z_p$-extension of a function field (cf. \cite[p.293, l.-2]{rosenmain} or \cite[Theorem 11.5]{rosentext}). The referee has pointed out that the methods in this paper should in principle generalize to the setting of Drinfeld modules over a function field, however, this has not been undertaken in the current work. 

\par The article consists of four sections, including the introduction. In section \ref{s 2} we discuss the theory of Drinfeld modules and set up relevant notation. In section \ref{s 3}, the fine Selmer group is introduced and we associate the Iwasawa invariants to this module. In the section \ref{s 4}, the main result is proven. We provide an illustrative example in subsection \ref{example section}.

\subsection*{Acknowledgment} The author thanks the anonymous referee for the excellent report.

\section{Drinfeld modules and associated Galois representations}\label{s 2}
\par In this section, we discuss preliminaries on Drinfeld modules. We refer the reader to \cite{papibook} for further details. Throughout, we shall let $q$ be a power of a prime number and $\F_q$ be the finite field with $q$ elements. We denote by $A$, the polynomial ring $\F_q[T]$, and $F:=\op{Frac}(A)=\F_q(T)$. A \emph{prime} of $F$ is an isomorphism class of valuations of $F$. Such a valuation gives rise to a discrete valuation ring contained in $F$, whose fraction field is $F$. Denote by $\Omega_F$ the set of all primes of $F$. A non-zero prime ideal $\mathfrak{l}$ contained in $A$ gives rise to an equivalence class of valuations on $F$. For ease of notation, we write $\fl\in \Omega_F$ to denote the associated equivalence class. Let $v_\infty$ denote the valuation at $\infty$, normalized by setting $v_\infty(T):=-1$. For ease of notation, $\infty$ shall simply be used in place of $v_\infty$. We set $\Omega_F':=\Omega_F\backslash \{\infty\}$; the set of primes that arise from non-zero prime ideals $\mathfrak{l}$ in $A$. Given any non-zero prime ideal $\mathfrak{a}$ in $A$, there is a unique monic polynomial $f_{\mathfrak{a}}$ that generates $A$. By abuse of notation simply denote $f_{\mathfrak{a}}$ by $\mathfrak{a}\in A$. Let $M$ be an $A$-module, set $M[\mathfrak{a}]$ to denote the kernel of the multiplication by $\mathfrak{a}$ map $M\xrightarrow{\mathfrak{a}} M$. 

\par For $\fl\in \Omega_F'$, set $\F_{\fl}$ to denote the residue field $A/\fl$ for $\fl\in \Omega_F'$. Given a finite set of primes $S\subset \Omega_F$, we let $F_S$ to denote the maximal separable extension of $F$ in which the primes $v\notin S$ are unramified. This is a Galois extension of $F$, we set $\op{G}_S:=\op{Gal}(F_S/F)$. A polynomial $f(x)$ is $\F_q$-linear if 
\[\begin{split}
    & f(x+y)=f(x)+f(y);\\
    & f(\alpha x)=\alpha f(x)\text{ for all }\alpha\in \F_q.
\end{split}\]Given an $\F_q$-algebra $K$, denote by $K\langle x\rangle$ be the ring of polynomials $f(x)\in K[x]$ that are $\F_q$-linear. For $f, g\in K\langle x\rangle$, we find that $f\circ g\in K\langle x\rangle$. This defines composition in the ring $K\langle x\rangle$. Let $K\{\tau\}$ be the ring of twisted polynomials 
\[f(\tau)=\sum_{i=0}^d a_i \tau^i\] with $a_i\in K$. In this ring, addition is given by 
\[\sum_i a_i \tau^i+\sum_i b_i \tau^i=\sum_i(a_i+b_i)\tau^i,\] and multiplication is prescribed by the relation
\[\begin{split}
    \tau^i \alpha= \alpha^{q^i} \tau^i.
\end{split}\]
Since $\alpha^q=\alpha$ for all $\alpha\in \F_q$, we find that $K\langle x\rangle $ is an $\F_q$-algebra.
\par There is a natural isomorphism of $\F_q$-algebras 
\[\eta: K\{\tau\}\xrightarrow{\sim} K\langle x\rangle,\] defined by 
\[\eta\left(\sum_{i=0}^d a_i \tau^i\right):=\sum_{i=0}^d a_i x^{q^i}. \]
For $f(\tau)\in K\{\tau\}$, we shall set $f(x):=\eta(f)$.
\begin{definition}
    Let $f(\tau)\in K\{\tau\}$, write 
    \[f(\tau)=\sum_{i=h}^d a_i \tau^i, \] where $a_h a_d\neq 0$. Then, the \emph{height} (resp. \emph{degree}) of $f$ is defined by setting $\op{ht}_\tau(f):=h$ (resp. $\op{deg}_\tau (f):=d$). The formal derivative of $f(x)$ is given by $\partial (f)=a_0$ (taken to be $0$ if $\op{ht}_\tau(f)>0$). We find that $f(x)$ is separable if an only if $\op{ht}_\tau(f)=0$. 
\end{definition}
Let $K$ be an $\F_q$-algebra, we say that $K$ is an $A$-field if there is a prescribed homomorphism of $\F_q$-algebras $\gamma: A\rightarrow K$. 
\begin{definition}
    Let $K$ be an $A$-field and $r\in \Z_{\geq 1}$. A \emph{Drinfeld module} of rank $r$ over $K$ is a homomorphism of $\F_q$-algebras
    \[\phi: A\rightarrow K\{\tau\},\]
    taking $a\in A$ to $\phi_a\in K\{\tau\}$, such that \begin{enumerate}
        \item $\partial(\phi_a)=\gamma(a)$,
        \item $\op{deg}_\tau(\phi_a)=r \op{deg}_T(a)$.
    \end{enumerate}
\end{definition}
We shall set $r(\phi)$ to denote the rank of $\phi$. Since $\phi$ is a homorphism of $\F_q$-algebras, it completely determined by 
\[\phi_T=\gamma(T)+\sum_{i=1}^r a_i \tau^i,\] where $a_r\neq 0$. 
\begin{definition}\label{defn drinfeld}
    Let $(K, \gamma)$ be an $A$-field. We set $\op{char}_A(K):=0$ if $\gamma$ is injective. We say in this case that $K$ is \emph{generic characteristic}. On the other hand, if $\mathcal{P}:=\op{ker}\gamma$ is a non-zero prime ideal of $A$, we set $\op{char}_A(K):=\mathcal{P}$.
\end{definition} We note that if $K$ has generic characteristic, then $\partial(\phi_a)=\gamma(a)\neq 0$ for all $a\neq 0$ in $A$. Therefore, in this case, $\phi_a(x)$ is a separable polynomial for all $a\in A$. When $\op{char}_A(K)=\mathcal{P}\neq 0$, we find that $\phi_a(x)$ is separable polynomial if and only if $\mathcal{P}\nmid a$. Given a non-zero ideal $\mathfrak{a}$ of $A$, define $\phi_{\mathfrak{a}}:=\phi_a$, where $a$ is the monic polynomial that generates $\mathfrak{a}$. The degree $\op{deg}(\mathfrak{a})$ is defined to be $\op{deg}_T(a)$, the degree of $a$ as a polynomial in $T$. It follows from the Definition \ref{defn drinfeld} that $\op{deg}_\tau(\phi_a)=r \op{deg}(\mathfrak{a})$. 

\par Since $\mathcal{P}$ is the kernel of $\gamma$ and $\partial \phi_a=\gamma(a)$, the height of $\phi_{\mathcal{P}}$ is positive. Moreover, it is divisible by $\op{deg}(\mathcal{P})$ (cf. \cite[Lemma 3.2.11]{papibook}). The \emph{height} of $\phi$ is defined as follows
\[H(\phi):=\begin{cases}
 0 & \text{ if }\op{char}_A(K)=0;\\
 \left(\frac{\op{ht}_\tau(\phi_\mathcal{P})}{\op{deg}\mathcal{P}}\right)& \text{ if }\op{char}_A(K)=\mathcal{P}\neq 0.\\ \end{cases}\]
 Given a Drinfeld modules $\phi$ and $\psi$ over an $A$-field $K$, a \emph{morphism} $u\in \op{Hom}_K(\phi, \psi)$ is an element $u\in K\{\tau\}$ such that the relationship $u \phi_a=\psi_a u$ holds for all $a\in A$. The endomorphism ring of $\phi$ (over $K$) is defined as follows
\[\op{End}_K(\phi):=\op{Hom}_K(\phi, \phi)=\{u\in K\{\tau\}\mid u \phi_a=\phi_a u\text{ for all }a\in A\}.\] This ring is in fact is an $A$-algebra, with $a\cdot u:=\phi_a u=u\phi_a$. Denote by 
\[\op{str}_\phi=\op{str}_{\phi, K}: A\rightarrow \op{End}_K(\phi)\] the \emph{structure homomorphism}, mapping $a$ to $\phi_a$. Since $\op{deg}_\tau(\phi_a)=r\op{deg}_T(a)$, it follows that $\op{str}_{\phi, K}$ is injective. We shall write $\op{End}_K(\phi)=A$ to mean that this structure homomorphism is an isomorphism.
\par Let $\phi$ and $\psi$ be isomorphic Drinfeld modules over $K$ and suppose $u\in \op{Hom}(\phi, \psi)$ is an isomorphism. Then, \[\op{deg} u+\op{deg}(u^{-1})=\op{deg}_{\tau}(u u^{-1})=\op{deg}_{\tau}(1)=0.\] Therefore, $u\in K$ is a constant, we find that $u\phi u^{-1}=\psi$. Writing 
\[\phi_T=\sum_{i=0}^r g_i \tau^i,\]
and $u=c^{-1}$, we find that 
\[\begin{split}\psi_T=& \sum_{i=0}^r c^{-1} g_i \tau^i c \\ 
=& \sum_{i=0}^r c^{q^i-1} g_i \tau^i .\end{split}\]
\par Let $\bar{K}$ (resp. $K^{\op{sep}}$) be an algebraic closure (resp. separable closure) of $K$. We choose these fields so that $K^{\op{sep}}$ is contained in $\bar{K}$. We set $\op{G}_K$ to denote the absolute Galois group $\op{Gal}(K^{\op{sep}}/K)$. Let $\phi:A\rightarrow K\{\tau\}$ be a Drinfeld module, then we have a $\phi$-twisted $A$-module structure on $\bar{K}$, defined as follows. Let $y\in \bar{K}$ and $a\in A$. Then, set 
\[a\cdot y:=\phi_a(y).\]Let $E/K$ be an extension that is contained in $\bar{K}$. For $y\in E$, we have that $\phi_a(y)\in E$. We denote by $^\phi E$ denote $E$ with this twisted $A$-module structure. Let $a\in A$ be a non-constant element. Assume that if $\mathfrak{p}=\op{char}_A(K)\neq 0$, then, $\mathfrak{p}\nmid a$. Denote by $\phi[a]\subset \bar{K}$ the set of roots of $\phi_a(x)$. It is easy to see that $\phi[a]$ is an $A$-submodule of $^{\phi}\bar{K}$. Moreover, it is easy to see that it is stable with respect to the action of $\op{G}_K$, and that $\op{G}_K$ acts on $\phi[a]$ by $A$-module automorphisms. As an $A$-module $\phi[a]\simeq \left(A/a\right)^r$, cf. \cite[Theorem 3.5.2]{papibook}. We choose an $A/a$-module basis of $\phi[a]$, the associated Galois representation is denoted
\[\rho_{\phi, a}: \op{G}_K\rightarrow \op{GL}_r\left(A/a\right).\] Given a non-zero ideal $\mathfrak{a}$ of $A$, set $\phi[\mathfrak{a}]:=\phi[b]$, where $b$ is the monic polynomial generator of $\mathfrak{a}$. Given a non-zero prime ideal $\fl\neq \mathcal{P}$ of $A$, set 
\[\phi[\fl^\infty]:=\bigcup_{i\geq 1} \phi[\fl^i].\]
Denote by $A_{\fl}:=\varprojlim_i A/\fl^i$ denote the completion of $A$ at $\fl$, and set $F_{\fl}:=\op{Frac}(A_{\fl})$. We an isomorphism of $A$-modules $\phi[\fl^\infty]\simeq \left(F_{\fl}/A_{\fl}\right)^r$. The Tate-module $T_{\fl}(\phi)$ the inverse limit 
\[T_{\fl}(\phi):=\varprojlim_n \phi[\fl^n], \] with respect to multiplication by $\phi_{\fl}$-maps. We choose compatible isomorphisms $\phi[\fl^n]\simeq \left(A/\fl^n\right)^r$, and thus find that $T_{\fl}(\phi)\simeq A_{\fl}^r$. The Galois group $\op{G}_K$ acts on $T_{\fl}(\phi)$ by $A_{\fl}$-linear automorphisms and let 
\[\rho_{\phi, \fl^\infty}: \op{G}_K\rightarrow \op{GL}_r(A_{\fl})\] be the associated Galois representation. 

\par Let $\phi$ be a Drinfeld module of rank $r$ over $F$, and $\p$ a non-zero prime ideal of $A$. Denote by 
\[\rho: \op{G}_F\rightarrow \op{GL}_r(A_{\p})\] the Galois representation associated to $T_{\p}(\phi)$. For $\fl\in \Omega_F'$, let $\phi_{\fl}$ be the base change of $\phi$ to $F_{\fl}$. 
\begin{definition}
    With respect to notation above, we say that $\phi$ has \emph{stable reduction} at $\fl$ if it is isomorphic (over $F_{\fl}$) to a Drinfeld module 
    \[\psi:A\rightarrow A_{\fl}\{\tau\}\]
    whose reduction $\bar{\psi}$ is a Drinfeld module. We say that $\psi$ has good reduction if the rank of $\bar{\psi}$ is equal to $r$ (the rank of $\phi$). We say that $\phi$ has bad reduction at $\fl$ if it does not have good reduction at $\fl$.
\end{definition}
Let $S_{\op{bad}}(\phi)$ be the set of primes $\fl\in \Omega_F'$ at which $\phi$ has bad reduction. We write 
\[\phi_T=T+a_1\tau+a_2\tau^2+\dots+a_r \tau^r.\] After replacing $\phi$ by an isomorphic Drinfeld module, we assume, without loss of generality that all the coefficients $a_i\in A$. The set of primes $S_{\op{bad}}(\phi)$ is a subset of the prime divisors of $a_r$, and therefore, is a finite set. The representation $\rho$ is unramified at all primes $\fl\in \Omega_F$ such that $\fl\notin S_{\op{bad}}(\phi)\cup \{\p, \infty\}$, cf. \cite[Theorem 6.3.1]{papibook}.

\section{Fine Selmer groups and their properties}\label{s 3}
\par Recall that $F$ is the fraction field $\F_q(T)$. The map $\gamma: A\hookrightarrow F$ is the natural inclusion, and $F$ is an $A$-field with respect to this choice of $\gamma$. Let $\phi$ be a Drinfeld module over $F$ of rank $r$. Let $\p$ be a fixed non-zero prime ideal of $A$. Then, $\phi[\p^\infty]$ is unramified at all non-zero prime ideals $\fl\neq \p$ at which $\phi$ has good reduction. We shall denote by $\Omega_F$ to denote the set of primes of $F$. These consist of the non-zero prime ideals of $A$, and the prime $\infty$, that corresponds to the valuation $v_\infty$ normalized by $v_\infty(T):=-1$. Let $S\subset \Omega_F$ be a finite set of primes containing $\{\p, \infty\}$ and $S_{\op{bad}}(\phi)$. Since $F$ has generic characteristic, $\phi[\p^\infty]$ is contained in $F^{\op{sep}}$. Let $L$ be a separable Galois extension of $F$ contained in $F_S$. Given a prime $v\in \Omega_F$, set $v(L)$ to denote the set of primes of $L$ that lie above $v$. For $w\in v(L)$, set $L_w$ to denote the union of completions $L'_w$, as $L'$ ranges over all finite extensions of $F$ that are contained in $L$. Take $H^i(L_w, \cdot)$ to denote $H^i(L_w^{\op{sep}}/L_w, \cdot)$. Given a finite set of primes $\Sigma$ of $F$, we set $\Sigma(L)$ to be the set of primes $w$ that lie above a prime $v\in \Sigma$. 

\par We define the local condition at $v$ as follows
\[J_v(\phi[\p^\infty]/L):=\prod_{w\in v(L)} H^1\left(L_{w}, \phi[\p^\infty]\right).\]
\begin{definition}
    The \emph{fine Selmer group} of $\phi[\p^\infty]$ is defined as follows
    \[\op{Sel}_0^S(\phi[\p^\infty]/L):=\op{ker}\left(H^1(F_S/L, \phi[\p^\infty])\longrightarrow \bigoplus_{v\in S} J_v(\phi[\p^\infty]/L)\right).\]
\end{definition}
Let $G:=\op{Gal}(L/K)$, denote by $\cO$ the valuation ring $A_{\p}$, and take $\mathcal{K}:=\op{Frac}\cO$. Let $\varpi$ be a uniformizer of $\cO$. The fine Selmer group defined above is an $\cO$-module, and also a module over $G$. The action of $G$ on $\op{Sel}_0^S(\phi[\p^\infty]/L)$ is via $\cO$-module automorphisms. The fine Selmer group $\op{Sel}_0^S(\phi[\p^\infty]/L)$ can be identified with the direct limit
\[\op{Sel}_0^S(\phi[\p^\infty]/L)=\varinjlim_{L'}  \op{Sel}_0^S(\phi[\p^\infty]/L'),\] where $L'/K$ runs over all finite Galois extensions of $K$ that are contained in $L$. 
\par The \emph{Iwasawa algebra} of $G$ over $\cO$ is the completed group algebra 
\[\cO\llbracket G\rrbracket:= \varprojlim_N \cO[G/N],\] where $N$ runs over all finite index normal subgroups of $G$. For each finite extension $L'/K$, we find that $\op{Sel}_0^S(\phi[\p^\infty]/L')$ is an $\cO[\op{Gal}(L'/K)]$-module. The Pontryagin-dual of $\op{Sel}_0^S(\phi[\p^\infty]/L)$ is denoted
\[Y^S(\phi[\p^\infty]/L):=\op{Hom}_{\cO-\op{mod}}\left(\op{Sel}_0^S(\phi[\p^\infty]/L), \mathcal{K}/\cO\right),\] and is a module over $\cO\llbracket G\rrbracket$. Let $F^{\op{nr}}=\bar{\F}_q(T)$ be the maximal unramified extension of $F$, and choose isomorphisms 
    \[\op{Gal}(F^{\op{nr}}/F)\xrightarrow{\sim} \op{Gal}(\bar{\F}_q/\F_q)\xrightarrow{\sim} \widehat{\Z}.\] Denote by $\cF$ the unique $\Z_p$-extension of $F$ that is contained in $F^{\op{nr}}$. This $\Z_p$-extension is given by $\cF=\kappa (T)$, where $\kappa\subset \bar{\F}_q$ is a $\Z_p$-extension of $\F_q$. It is for this reason that $\cF$ is referred to as the \emph{constant $\Z_p$-extension} of $F$. Set $\Gamma:=\op{Gal}(\cF/F)$ and $\Lambda_{\cO}=\cO\llbracket \Gamma \rrbracket$. The fine Selmer group $\op{Sel}_0^S(\phi[\p^\infty]/\cF)$ and its Pontryagin dual $Y^S(\phi[\p^\infty]/\cF)$ are naturally modules over $\Lambda_{\cO}$. There is a natural Galois action of $\Gamma$ on these modules, that arises from the canonical inclusion $\Gamma\hookrightarrow \Lambda_{\cO}^\times$. In greater detail, for $\phi\in \op{Sel}_0^S(\phi[\p^\infty]/\cF)$, $\psi\in Y^S(\phi[\p^\infty]/\cF)$ and $g\in \Gamma$, one has that $(g\cdot \psi)(\phi)=\psi(g^{-1}\cdot \phi)$.
\begin{proposition}\label{fine Selmer indep}
    Let $S$ and $S'$ be finite sets of primes containing $S_{\op{bad}}(\phi)$ and $\{\p, \infty\}$. Then, we have that 
    \[\op{Sel}_0^S(\phi[\p^\infty]/\cF)=\op{Sel}_0^{S'}(\phi[\p^\infty]/\cF).\]
\end{proposition}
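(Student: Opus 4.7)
The plan is to identify both fine Selmer groups as subgroups of the common ambient cohomology group $H^1(F_{S'}/\cF, \phi[\p^\infty])$ via inflation, and then match up the local conditions. Since $\phi[\p^\infty]$ is unramified outside $S_{\op{bad}}(\phi) \cup \{\p, \infty\} \subset S$, it is contained in $F_S$, so $\op{Gal}(F_{S'}/F_S)$ acts trivially on it. The inflation-restriction sequence takes the form
\[
0 \to H^1(F_S/\cF, \phi[\p^\infty]) \xrightarrow{\op{inf}} H^1(F_{S'}/\cF, \phi[\p^\infty]) \xrightarrow{\op{res}} \op{Hom}(\op{Gal}(F_{S'}/F_S), \phi[\p^\infty]),
\]
and the image of $\op{inf}$ is precisely the kernel of $\op{res}$.

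For the inclusion $\op{Sel}_0^{S'}(\phi[\p^\infty]/\cF) \subset \op{Sel}_0^S(\phi[\p^\infty]/\cF)$, I would first show that any class $c' \in \op{Sel}_0^{S'}$ lies in the image of $\op{inf}$. The closed normal subgroup $\op{Gal}(F_{S'}/F_S)$ is topologically generated by the inertia subgroups at primes of $F_{S'}$ lying above $v \in S' \setminus S$: if $H$ is the closed normal subgroup generated by these inertia subgroups, then $F_{S'}^H/F$ is unramified outside $S$, forcing $F_{S'}^H \subset F_S$ by maximality of $F_S$. Because $c'|_{\cF_w} = 0$ for every $w \mid v \in S' \setminus S$, the restriction of $c'$ to each such inertia subgroup vanishes, so $\op{res}(c') = 0$. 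The preimage in $H^1(F_S/\cF, \phi[\p^\infty])$ inherits the local triviality at $v \in S$ from $c'$, so it lies in $\op{Sel}_0^S$.

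For the reverse inclusion, given $c \in \op{Sel}_0^S$, I must check the additional local conditions on $\op{inf}(c)$ at each $v \in S' \setminus S$ and $w \mid v$ in $\cF$. The relevant restriction factors through $H^1((F_S)_{w'}/\cF_w, \phi[\p^\infty])$ for some prime $w' \mid w$ in $F_S$. Since $v \notin S$, both $(F_S)_{w'}/F_v$ and $\cF_w/F_v$ are unramified, hence so is $(F_S)_{w'}/\cF_w$, and the image lies inside the unramified cohomology $H^1(\op{Gal}(\cF_w^{\ur}/\cF_w), \phi[\p^\infty])$.

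The main obstacle is proving that this unramified cohomology vanishes. The residue field $k_w$ of $\cF_w$ contains $\kappa$ (indeed $k_w = \kappa \cdot \F_{\fl}$ at a finite prime $v$ corresponding to $\fl$, and $k_w = \kappa$ at infinity), so $\op{Gal}(\cF_w^{\ur}/\cF_w) = \op{Gal}(\bar{\F}_q/k_w)$ embeds into $\op{Gal}(\bar{\F}_q/\kappa)$, which is the pro-(prime-to-$p$) part of $\widehat{\Z}$ because $\kappa/\F_q$ is the $\Z_p$-extension with $p = \op{char}(\F_q)$. In particular, $\op{Gal}(\cF_w^{\ur}/\cF_w)$ has supernatural order coprime to $p$. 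On the other hand, $|\phi[\p^n]| = q^{r n \op{deg}(\p)}$ is a power of $p$, so $\phi[\p^\infty]$ is a $p$-primary discrete module. The vanishing of higher continuous cohomology of a profinite group of order coprime to $p$ acting on a $p$-primary module then gives $H^1(\op{Gal}(\cF_w^{\ur}/\cF_w), \phi[\p^\infty]) = 0$, completing the verification.
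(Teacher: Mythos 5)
Your proof is correct and takes essentially the same route as the paper: both reduce the comparison to showing that the unramified cohomology $H^1(\op{Gal}(\bar\kappa_w/\kappa_w),\phi[\p^\infty])$ vanishes at primes $w$ over $v\in S'\setminus S$, because $\op{Gal}(\bar\kappa_w/\kappa_w)$ is pro-(prime-to-$p$) while $\phi[\p^\infty]$ is $p$-primary. You spell out the inflation--restriction step (showing classes in $\op{Sel}_0^{S'}$ come from $H^1(F_S/\cF,\cdot)$ via the inertia-generator argument), which the paper compresses into a single ``observe''; otherwise the arguments coincide.
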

\begin{proof}
  We assume without loss of generality that $S\subseteq S'$, set $S'':=S'\backslash S$. For $w\in S''$, let $\kappa_w$ denote the residue field at $w$. Denote by $\op{I}_w$ the inertia subgroup of $\op{Gal}(\bar{\cF}_w/\cF_w)$. Since $\phi[\p^\infty]$ is unramified at $w$, we find that $\phi[\p^\infty]^{\op{I}_w}=\phi[\p^\infty]$.
  \par Set $H^1_{\op{nr}}(\cF_w, \phi[\p^\infty])$ denote the image of the inflation map 
  \[\op{inf}: H^1(\kappa_w, \phi[\p^\infty])\hookrightarrow H^1(\cF_w, \phi[\p^\infty]).\] We identify $H^1_{\op{nr}}(\cF_w, \phi[\p^\infty])$ with $H^1\left(\op{Gal}(\bar{\kappa}_w/\kappa_w),\phi[\p^\infty]\right)$. Observe that $\op{Sel}_0^{S'}(\phi[\p^\infty]/\cF)$ consists of classes in $\op{Sel}_0^{S}(\phi[\p^\infty]/\cF)$ that are unramified at all primes $w\in S''(\cF)$. In other words, we have a left exact sequence
    \begin{equation}\label{ses1}0\rightarrow \op{Sel}_0^{S'}(\phi[\p^\infty]/\cF)\rightarrow \op{Sel}_0^{S}(\phi[\p^\infty]/\cF)\rightarrow \bigoplus_{w\in S''(\cF)} H^1\left(\op{Gal}(\bar{\kappa}_w/\kappa_w), \phi[\p^\infty]\right).\end{equation}
   Since all primes of $F$ are finitely decomposed in $\cF$, we find that $\op{Gal}(\kappa_w/\F_q)\simeq \Z_p$, and hence, $\op{Gal}(\bar{\kappa}_w/\kappa_w)\simeq \prod_{\ell\neq p} \Z_\ell$. On the other hand, $\phi[\p^\infty]$ is an $\F_p$-vector space and $\op{Gal}(\bar{\kappa}_w/\kappa_w)$ has no $p$-primary subquotients. It follows that  \[H^1\left(\op{Gal}(\bar{\kappa}_w/\kappa_w), \phi[\p^\infty]\right)=0\]
   for all primes $w$. Thus, it follows from \eqref{ses1} that 
    \[\op{Sel}_0^S(\phi[\p^\infty]/\cF)=\op{Sel}_0^{S'}(\phi[\p^\infty]/\cF).\]
\end{proof}
Proposition \ref{fine Selmer indep} implies that the fine Selmer group over $\cF$ is independent of the choice of primes $S$ containing $\{\p, \infty\}$ and the primes at which $\phi$ has bad reduction. Henceforth, we suppress the dependence on $S$ by simply writing $\op{Sel}_0(\phi[\p^\infty]/\cF)$ to denote this fine Selmer group. We recall that $Y(\phi[\p^\infty]/ \cF)$ is a module over $\Lambda_{\cO}:=\cO\llbracket \Gamma\rrbracket$. Choose a topological generator $\gamma$ of $\Gamma$. Setting $T:=(\gamma-1)$, we identify $\Lambda_{\cO}$ with the formal power series ring $\cO\llbracket T\rrbracket$. We shall study the algebraic structure of this module. 
\par We recall that a polynomial $f(T)\in \cO[T]$ is \emph{distinguished} if it is a monic polynomial whose non-leading coefficients all belong to $(\varpi)$. The Weierstrass preparation theorem states that any non-zero element $f(T)\in \Lambda$ can be uniquely factored as 
\[f(T)=\varpi^n u(T) g(T),\]
where $u(T)$ is a unit in $\Lambda$ and $g(T)$ is a distinguished polynomial, cf. \cite[Theorem 2.7.10]{papibook}.

\begin{proposition}\label{structure thm}
    Let $M$ be a finitely generated $\cO\llbracket T\rrbracket$-module. By the structure theory of finitely generated and torsion modules over $\cO\llbracket T\rrbracket$, there is a homomorphism 
\[M\longrightarrow \cO\llbracket T\rrbracket^r\oplus \left(\bigoplus_{i=1}^s \frac{\cO\llbracket T\rrbracket}{(\varpi^{\mu_i})}\right)\oplus \left( \bigoplus_{i=1}^s \frac{\cO\llbracket T\rrbracket}{(f_j(T))} \right),\] whose kernel and cokernel are both finite. In the above decomposition, $f_j(T)$ is a distinguished polynomial. The module $M$ is torsion over $\cO\llbracket T\rrbracket$ if and only if $r=0$. 
\end{proposition}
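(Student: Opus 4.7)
The plan is to follow the classical proof of the structure theorem for finitely generated modules over an Iwasawa algebra, adapting the standard argument (e.g.\ for $\cO=\Z_p$) to an arbitrary complete discrete valuation ring $\cO$. The Weierstrass preparation theorem just recalled is exactly the tool needed to transport every step.

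First, I would establish the basic commutative-algebra properties of $\Lambda := \cO\llbracket T \rrbracket$. It is a Noetherian local ring with maximal ideal $(\varpi, T)$; Noetherianity follows from $\cO$ being Noetherian together with completeness. From the Weierstrass factorization $f = \varpi^n u(T) g(T)$ with $u$ a unit and $g$ distinguished, together with the Euclidean algorithm for distinguished polynomials, one deduces that $\Lambda$ is a unique factorization domain whose irreducible elements are, up to units, $\varpi$ and the irreducible distinguished polynomials $P(T)$. In particular $\Lambda$ has Krull dimension $2$, and every height-one prime ideal is principal, generated either by $\varpi$ or by some irreducible distinguished $P(T)$. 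The localization $\Lambda_{\mathfrak{q}}$ at each height-one prime $\mathfrak{q}$ is then a discrete valuation ring.

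Next I would introduce the notion of \emph{pseudo-null} module: a finitely generated $\Lambda$-module $N$ is pseudo-null iff its support has codimension $\geq 2$, equivalently iff $N_{\mathfrak{q}} = 0$ for every height-one prime $\mathfrak{q}$. For our two-dimensional local ring this is the same as $N$ being finite. A homomorphism with pseudo-null kernel and cokernel is a \emph{pseudo-isomorphism}. Given a finitely generated torsion $\Lambda$-module $M$, its annihilator has height one, so only finitely many height-one primes $\mathfrak{q}_1, \dots, \mathfrak{q}_t$ occur in the support of $M$. At each such $\mathfrak{q}_i$, the localization $M_{\mathfrak{q}_i}$ is a finitely generated torsion module over the DVR $\Lambda_{\mathfrak{q}_i}$, and the classical structure theorem over a DVR decomposes it as a finite direct sum of cyclic torsion modules $\Lambda_{\mathfrak{q}_i}/\mathfrak{q}_i^{e}\Lambda_{\mathfrak{q}_i}$. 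Patching these local data, I would build an elementary module $E(M)$ of the required form and produce a map $M \to E(M)$ that becomes an isomorphism after localizing at each $\mathfrak{q}_i$, hence is a pseudo-isomorphism. The contributions from $\mathfrak{q}=(\varpi)$ yield the summands $\Lambda/(\varpi^{\mu_i})$; the contributions from $\mathfrak{q}=(P_j(T))$ yield the summands $\Lambda/(f_j(T))$ (with $f_j$ a power of $P_j$, itself distinguished).

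Finally, for a general finitely generated $M$, let $M_{\tor}$ denote its $\Lambda$-torsion submodule; then $M/M_{\tor}$ is torsion-free. Since $\Lambda$ is a two-dimensional regular local ring, a standard argument (choose a basis of $(M/M_{\tor})\otimes_{\Lambda} \Frac(\Lambda)$ from elements of $M/M_{\tor}$) produces an injection $M/M_{\tor} \hookrightarrow \Lambda^r$ whose cokernel is torsion, and further checking on each height-one localization shows the cokernel is pseudo-null. Combining this with the pseudo-isomorphism for $M_{\tor}$ constructed above, and lifting to $M$ via a splitting of $M \twoheadrightarrow M/M_{\tor}$ up to pseudo-null error, yields the decomposition in the statement; the final clause ($M$ is torsion iff $r=0$) is immediate from this picture. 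The step I expect to be most delicate is the passage from the local decompositions over each $\Lambda_{\mathfrak{q}_i}$ to a single global pseudo-isomorphism $M \to E(M)$, since one must show that the cyclic summands prescribed by the local data can be realized inside $M$ simultaneously without disturbing each other, which is exactly where the ``pseudo'' (finite kernel/cokernel) slack is unavoidable.
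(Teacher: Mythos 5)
The paper's own ``proof'' is a one-line deferral: it cites Washington, Chapter~13, and observes that the argument for $\Z_p\llbracket T\rrbracket$ carries over with $p$ replaced by $\varpi$. Your reconstruction follows the standard commutative-algebra route (Weierstrass preparation gives the UFD and local-ring structure, height-one localizations are DVRs, pseudo-null means finite, decompose the torsion part via localization and patching), which is the right strategy and is in the same spirit as Washington's treatment.

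However, your handling of the torsion-free part has a genuine gap. You claim that a denominator-clearing argument yields an injection $M/M_{\tor}\hookrightarrow \Lambda^r$ (where $\Lambda:=\cO\llbracket T\rrbracket$) with torsion cokernel, and that ``further checking on each height-one localization shows the cokernel is pseudo-null.'' This is false as stated: after localizing at a height-one prime $\mathfrak{q}$, both sides become free $\Lambda_{\mathfrak{q}}$-modules of rank $r$, and an injection between free modules of equal rank over a DVR can have nonzero cokernel (for instance $\varpi\Lambda\hookrightarrow\Lambda$ has cokernel $\Lambda/(\varpi)$, which is torsion but certainly not pseudo-null). Also, your parenthetical (``choose a basis $\dots$ from elements of $M/M_{\tor}$'') produces a map in the wrong direction, namely $\Lambda^r\hookrightarrow M/M_{\tor}$. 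The correct device is the double dual: for a torsion-free finitely generated $N$ over a Noetherian normal domain, $N\to N^{**}$ is injective with cokernel supported in codimension $\geq 2$ (this is the defining feature of reflexivity over a Krull domain), so the cokernel is pseudo-null; and $N^{**}$ is actually free here because $\Lambda$ is a two-dimensional regular local ring, so a reflexive module has depth $\geq 2=\dim\Lambda$ and Auslander--Buchsbaum forces projective dimension zero. Once you replace the denominator-clearing embedding with $N\hookrightarrow N^{**}\cong\Lambda^r$, the rest of your sketch, including the localization-and-patching step for the torsion part that you flag as the delicate point, is sound.
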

\begin{proof}
    The result is proven \cite[Ch. 13.1]{washington} for Iwasawa algebras over $\Z_p$. This proof generalizes verbatim to Iwasawa algebras over a general discrete valuation ring (in all the arguments in \emph{loc. cit.}, replace $p$ with $\varpi$).
\end{proof}

Let $M$ be a finitely generated and torsion $\Lambda$-module. The \emph{characteristic element} is defined as follows
\[f_M(T):=\prod_i \varpi^{\mu_i} \times \prod_j f_j(T).\] This element decomposes into a product 
\[f_M(T)=\varpi^\mu\times g(T),\] where $\mu\in \Z_{\geq 0}$ and $g(T)$ is a distinguished polynomial. The $\mu$-invariant is the quantity $\mu=\sum_i \mu_i$ in the above decomposition. The $\lambda$-invariant is the degree of $g(T)$. A module $M$ over $\cO$ is said to be $\p$-primary if $M=\bigcup_{i\geq 1} M[\p^i]$. Given a $\p$-primary $\cO$-module, the Pontryagin dual is 
\[M^\vee=\op{Hom}_{\cO-\op{mod}}\left(M, \mathcal{K}/\cO\right).\] A primary $\Lambda_{\cO}$-module is said to be cofinitely generated (resp. cotorsion) is $M^\vee$ is a finitely generated (resp. torsion) as a $\Lambda_{\cO}$-module. 

\begin{lemma}\label{finitely generated N lemma}
    Let $N$ be a compact $\Lambda_{\cO}$-module. Then, $N$ is finitely generated if and only if $N/(T,\varpi) N$ is finite dimensional as an $\F_q$ vector space. 
\end{lemma}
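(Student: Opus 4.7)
The approach is to recognize this as the topological form of Nakayama's lemma for the complete Noetherian local ring $\Lambda_{\cO} = \cO\llbracket T\rrbracket$, whose maximal ideal is $\mathfrak{m} := (\varpi, T)$ and whose residue field $\Lambda_{\cO}/\mathfrak{m} = \F_\p$ is finite (and therefore a finite-dimensional $\F_q$-vector space). Both directions rest on the identification of $N/\mathfrak{m}N$ as a module over $\F_\p$.

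First I would dispatch the easy direction. Suppose $N$ is generated as a $\Lambda_{\cO}$-module by finitely many elements $x_1, \dots, x_n$. Then the images $\bar{x}_1, \dots, \bar{x}_n$ generate $N/\mathfrak{m}N$ over $\Lambda_{\cO}/\mathfrak{m} = \F_\p$, so $N/\mathfrak{m}N$ has dimension at most $n$ over $\F_\p$, which gives finiteness over $\F_q$ since $[\F_\p:\F_q]<\infty$.

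For the converse, assume that $N/\mathfrak{m}N$ is finite-dimensional over $\F_q$, hence also over $\F_\p$, and choose lifts $x_1, \dots, x_n \in N$ of an $\F_\p$-basis. Let $\varphi: \Lambda_{\cO}^n \to N$ be the continuous $\Lambda_{\cO}$-linear map sending $e_i \mapsto x_i$. Since $\Lambda_{\cO}$ is compact, so is $\Lambda_{\cO}^n$, and therefore $N' := \varphi(\Lambda_{\cO}^n)$ is a closed (hence compact) $\Lambda_{\cO}$-submodule of $N$. By construction the quotient $Q := N/N'$ is compact and satisfies $Q/\mathfrak{m}Q = 0$, i.e.\ $\mathfrak{m}Q = Q$. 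Iterating yields $Q = \mathfrak{m}^k Q$ for all $k \geq 1$. The heart of the argument is to conclude $Q = 0$ from this.

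The main obstacle, and the only non-formal step, is this final deduction, which is topological Nakayama for compact modules. Since $Q$ is a compact Hausdorff $\Lambda_{\cO}$-module and the submodules $\mathfrak{m}^k Q$ are closed and form a decreasing chain, compactness together with completeness of $\Lambda_{\cO}$ forces $\bigcap_{k} \mathfrak{m}^k Q = 0$; equivalently, the $\mathfrak{m}$-adic topology on $Q$ is Hausdorff. Combined with $Q = \mathfrak{m}^k Q$ for all $k$, this yields $Q = 0$, so $N = N'$ is generated by $x_1, \dots, x_n$ as a $\Lambda_{\cO}$-module. If a more self-contained justification is desired, one can realize the compact module $Q$ as an inverse limit of its finite quotients by open $\Lambda_{\cO}$-submodules and apply ordinary Nakayama level-by-level, using that each such finite quotient is a module over the local ring $\Lambda_{\cO}$.
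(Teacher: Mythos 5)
Your proof is correct and takes essentially the same approach as the paper, which simply defers to Washington's topological Nakayama lemma for compact $\Lambda$-modules (with $p$ replaced by $\varpi$). The one-line appeal to ``compactness together with completeness'' for the key step $\bigcap_k (T,\varpi)^k Q = 0$ is a bit informal, but the alternative you give --- writing the compact module $Q$ as an inverse limit of finite quotients by open submodules and applying ordinary Nakayama over the Noetherian local ring $\Lambda_{\cO}$ at each finite level --- is the standard rigorous argument.
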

\begin{proof}
    The proof of the result above is identical to that \cite[Lemma 13.16]{washington}, upon replacing $p$ with $\varpi$ in \emph{loc. cit}.
\end{proof}
Let $M$ and $M'$ be modules over $\Lambda_{\cO}$. A \emph{pseudo-isomorphism} $f:M\rightarrow M'$ is a homomorphism of $\Lambda_{\cO}$-modules whose kernel and cokernel are both finite. We say that $M$ is pseudo-isomorphic to $M'$ if there is a pseudo-isomorphism $f:M\rightarrow M'$. 

\begin{lemma}\label{main lemma 1}
    Let $M$ be a $\p$-primary $\Lambda_{\cO}$-module and let $\mu_{\p}(M)$ (resp. $\lambda_{\p}(M)$) denote the associated $\mu$ (resp. $\lambda$) invariant. Then, the following are equivalent
    \begin{enumerate}
        \item\label{c1 main lemma 1} $M$ is a cofinitely generated, cotorsion as a $\Lambda_{\cO}$-module with $\mu_{\p}(M)=0$,
        \item\label{c2 main lemma 1} $M[\varpi]$ is finite.
    \end{enumerate}
    Moreover, if the above conditions are satisfied, then, $\lambda\leq \op{dim}_\kappa(M[\varpi])$.  
\end{lemma}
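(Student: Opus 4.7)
The plan is to pass to the Pontryagin dual $Y := M^\vee$ and reduce both implications to a finiteness statement on the cokernel $Y/\varpi Y$. Since $(\cK/\cO)[\varpi] \simeq \F_\p$, Pontryagin-dualising the inclusion $M[\varpi] \hookrightarrow M$ yields a canonical isomorphism $(M[\varpi])^\vee \simeq Y/\varpi Y$, so in particular $\dim_{\F_\p} M[\varpi] = \dim_{\F_\p} Y/\varpi Y$. Under Pontryagin duality, condition (1) becomes the assertion that $Y$ is finitely generated and torsion over $\Lambda_{\cO}$ with $\mu_\p(Y) = 0$, while condition (2) becomes the assertion that $Y/\varpi Y$ is finite.

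For the implication (2) $\Rightarrow$ (1), I would first note that if $Y/\varpi Y$ is finite then \emph{a fortiori} $Y/(\varpi, T)Y$ is finite, so Lemma~\ref{finitely generated N lemma} shows that $Y$ is finitely generated over $\Lambda_{\cO}$. Proposition~\ref{structure thm} then produces a $\Lambda_{\cO}$-linear map $Y \to E$ with finite kernel $K$ and cokernel $C$, where
\[
E := \Lambda_{\cO}^{\,r} \oplus \bigoplus_{i=1}^{s} \Lambda_{\cO}/(\varpi^{\mu_i}) \oplus \bigoplus_{j=1}^{t} \Lambda_{\cO}/(f_j(T)).
\]
Right-exactness of $-\otimes_{\Lambda_{\cO}} \Lambda_{\cO}/\varpi$ applied to $Y \to E \to C \to 0$ yields $Y/\varpi Y \to E/\varpi E \to C/\varpi C \to 0$, so finiteness of $Y/\varpi Y$ and $C/\varpi C$ forces $E/\varpi E$ to be finite. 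But $\Lambda_{\cO}^{\,r}/\varpi \simeq \F_\p\llbracket T\rrbracket^{r}$ is infinite whenever $r>0$, and each $\Lambda_{\cO}/(\varpi,\varpi^{\mu_i}) \simeq \F_\p\llbracket T\rrbracket$ is infinite whenever $\mu_i \geq 1$. Consequently $r = 0$ and every $\mu_i$ vanishes, which is precisely (1).

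For the converse and the $\lambda$-bound, the structure theorem applied under (1) produces an exact sequence $0 \to K \to Y \to E \to C \to 0$ with $K, C$ finite and $E = \bigoplus_{j=1}^{t} \Lambda_{\cO}/(f_j)$, each $f_j$ distinguished of degree $d_j$. The key observation is that by the division algorithm for distinguished polynomials, $\Lambda_{\cO}/(f_j) \simeq \cO^{d_j}$ as $\cO$-modules, so $E$ is $\cO$-free of rank $\lambda_\p(M) = \sum_{j} d_j$. The four-term exact sequence then shows that $Y$ is finitely generated over $\cO$ of $\cO$-rank equal to $\lambda_\p(M)$, and hence decomposes as $Y \simeq \cO^{\lambda_\p(M)} \oplus Y_{\tor}$ with $Y_{\tor}$ a finite $\cO$-module. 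Reducing modulo $\varpi$ gives
\[
\dim_{\F_\p} Y/\varpi Y \;=\; \lambda_\p(M) + \dim_{\F_\p}(Y_{\tor}/\varpi Y_{\tor}) \;\geq\; \lambda_\p(M).
\]
Dualising proves both the finiteness of $M[\varpi]$ and the inequality $\lambda_\p(M) \leq \dim_{\F_\p} M[\varpi]$.

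The main technical point I anticipate is the bookkeeping around the pseudo-isomorphism supplied by Proposition~\ref{structure thm}: the comparison map $Y \to E$ has only finite kernel and cokernel, and one must verify carefully that this finite discrepancy neither obstructs the transfer of finiteness of $Y/\varpi Y$ to $E/\varpi E$ nor the transfer of $\cO$-freeness of $E$ to the $\cO$-module structure of $Y$. Both steps become routine once one exploits the fact that each distinguished quotient $\Lambda_{\cO}/(f_j)$ is $\cO$-free of finite rank, which collapses the pseudo-isomorphism subtleties to short Euler-characteristic computations on finite $\cO$-modules.
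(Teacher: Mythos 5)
Your proof is correct and follows the same overall strategy as the paper (pass to the Pontryagin dual, use the structure theorem over $\Lambda_{\cO}$, compare $\dim_{\F_\p} M[\varpi]$ with $\dim_{\F_\p} Y/\varpi Y$). In fact, your treatment of the implication (2) $\Rightarrow$ (1) is more complete than the paper's: after concluding that $N := M^\vee$ is finitely generated over $\Lambda_{\cO}$ via Lemma~\ref{finitely generated N lemma}, the paper's proof states an equivalence and then circles back to conclude $N/\varpi N$ is finite (which was the hypothesis) without ever explicitly deducing that $N$ is torsion with $\mu = 0$. Your argument supplies the missing step: you tensor the four-term exact sequence from the structure theorem with $\Lambda_{\cO}/\varpi$, observe that finiteness of $Y/\varpi Y$ and of the cokernel forces $E/\varpi E$ to be finite, and then read off from $\Lambda_{\cO}/(\varpi) \simeq \F_\p\llbracket T\rrbracket$ being infinite that the free rank and all $\mu_i$ must vanish. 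One small point you should make explicit: Lemma~\ref{finitely generated N lemma} applies only to compact modules, so before invoking it you should note (as the paper does) that $Y$ is compact, either because it is the Pontryagin dual of a discrete module, or because $Y = \varprojlim_n Y/\varpi^n Y$ is an inverse limit of finite groups once each $M[\varpi^n]$ is known to be finite.
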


\begin{proof}
    Assume that $M$ is is a cofinitely generated, cotorsion as a $\Lambda_{\cO}$-module with $\mu_{\p}(M)=0$, and set $N:=M^\vee$. We note that $F_\p/A_\p$ is a divisible $A_\p$-module, and thus is injective. Moreover, for all $n\geq 1$, there is a natural isomorphism of $N/(\varpi^n) N$ with $\left(M[\varpi^n]\right)^\vee$. Then by Proposition \ref{structure thm}, there is a pseudo-isomorphism $f:N\rightarrow N'$, where  \[N'\xrightarrow{\sim}\left( \bigoplus_{i=1}^s \frac{\cO\llbracket T\rrbracket}{(f_j(T))} \right).\] Here, $f_j(T)$ are distinguished polynomials, and $N'$ is a free $\cO$-module of rank \[\lambda=\sum_j \op{deg} f_j(T).\] Thus, $N'/\varpi N'$ is finite. Since $N$ is pseudo-isomorphic to $N'$, it follows that $N/\varpi N$ is finite, and hence, $M[\varpi]$ is finite.
    \par Conversely, suppose that $M[\varpi]$ is finite. Then from the exact sequence
    \[0\rightarrow M[\varpi]\rightarrow M[\varpi^{n+1}]\rightarrow M[\varpi^n],\] it follows that $M[\varpi^n]$ is finite for all $n$. Let $N$ be the Pontryagin dual of $M$, we find that $N/\varpi^n$ is finite for all $n$. Since $M$ is a $\p$-primary $\Lambda_{\cO}$-module, we have that $M=\varinjlim_n M[\varpi^n]$. Therefore, $N=\varprojlim_n N/\varpi^n$ is a compact $\Lambda_{\cO}$-module. Since $N/\varpi N$ is finite, it follows from Lemma \ref{finitely generated N lemma} that $N$ is finitely generated as a $\Lambda_{\cO}$-module. It then follows from Proposition \ref{structure thm} that $N$ is a torsion $\Lambda_{\cO}$-module with $\mu=0$ if and only if it is pseudo-isomorphic to $N'$, where $N'$ is a free $\cO$-module of finite rank. In particular, this implies that $N/\varpi N$ is finite. Since $M[\varpi]$ is dual to $N/\varpi N$, we deduce that $M[\varpi]$ is finite. 

    \par Assume that the equivalent conditions are satisfied. It follows from Proposition \ref{structure thm} that $N$ is pseudo-isomorphic to a free $\cO$-module of rank $\lambda$. Thus, as a $\cO$-module, $N$ is finitely generated of rank equal to $\lambda$. Thus, we find that \[\dim_{\F_q} M[\varpi]=\dim_{\F_q} N/\varpi N\geq \lambda. \]
\end{proof}

\begin{definition}Suppose for the sake of discussion that the dual fine Selmer group $Y(\phi[\p^\infty]/\cF)$ is a finitely generated and torsion $\cO\llbracket \Gamma \rrbracket$-module. We shall show in section \ref{s 4} that this is the case. The $\mu$ and $\lambda$ invariants of $Y(\phi[\p^\infty]/\cF)$ are then denoted $\mu_{\p}(\phi)$ and $\lambda_{\p}(\phi)$ respectively. 
\end{definition}
\section{Main results}\label{s 4}
\subsection{Iwasawa theory of class groups over function fields}
\par In this section, let $L$ be a function field (of transcendence degree $1$) over $\F_q$. Denote by $\Omega_L$ the set of all primes of $L$. Given $v\in \Omega_L$, set $L_v$ to denote the completion of $L$ at $v$. The ring of integers of $L_v$ is denoted $\cO_{L_v}$. We let $\mathbb{A}_L$ be the ring of adeles of $L$, and $\cC_L:=\mathbb{A}_L^\times/L^\times$ the idele class group. The Artin map gives an isomorphism 
\[\op{Art}:\widehat{\cC_L}\xrightarrow{\sim} \op{Gal}(L^{\op{ab}}/L).\] This allows one to parametrize all finite separable abelian extensions of $L$. A Weil divisor $D$ is a finite sum $\sum_i n_i v_i$, where $v_i\in \Omega_L$ and $n_i\in \Z$. The Weil divisor associated to a non-zero element $a\in L^\times$ is denoted $(a)$. Two Weil divisors $D$ and $D'$ are equivalent if $D-D'$ is a principal Weil divisor, i.e., of the form $(a)$ for some $a\in L^\times$. The class group of $L$, denoted by $\op{Cl}(L)$ is the group of equivalence classes of Weil divisors. We set $\op{Cl}_p(L)$ to denote the $p$-primary part of $\op{Cl}(L)$.

Let $S$ be a non-empty finite set of primes of $L$ and $\cO_S$ denote the ring of $S$-integers in $L$. Denote by $\op{Cl}_p(\cO_S)$ the $p$-primary part of the divisor class group of $\op{Spec}(\cO_S)$. This is identified with the quotient of $\op{Cl}_p(L)$ by the divisor classes generated by all primes $v\in S$.  There is a natural isomorphism $\op{Cl}_p(\cO_S)\simeq \op{Gal}(H_p^S(L)/L)$, where $H_p^S(L)$ is the maximal $p$-primary abelian extension of $L$ contained in $L^{\op{sep}}$ that is unramified at all primes of $L$ and in which all primes of $S$ are split (cf. \cite[p.64, l.-3]{H-Koch}). There is a natural surjection $\op{Cl}(L)\rightarrow \op{Cl}(\cO_S)$. We set $h_p^S(L)$ to denote the number of elements in $\op{Cl}_p(\cO_S)$. Let $L_n$ be the unramified $p^n$-extension of $L$, and $\cL:=\bigcup_n L_n$ the constant $\Z_p$-extension of $L$. Set $H_n:=H_p^S(L_n)$, and $X_n:=\op{Gal}(H_n/L_n)$. Denote by $H_\infty$ the union $\bigcup_n H_n$, and $X:=\op{Gal}(H_\infty/\mathcal{L})$. Setting $\Lambda_{\Z_p}:=\Z_p\llbracket \op{Gal}(\cL/L)\rrbracket\simeq \Z_p\llbracket T \rrbracket$, we observe that $X$ is a $\Lambda$-module. Set $\mu_p(L)$ (resp. $\lambda_p(L)$) denote the $\mu$ and $\lambda$-invariants associated to $X$, as a module over $\Lambda_{\Z_p}$. We refer to \cite{lizhao} for a survey on the algebraic Iwasawa theory of class groups for global function fields.
\begin{theorem}\label{ROSEN thm}
    The $\mu$-invariant $\mu_p^S(L)=0$.
\end{theorem}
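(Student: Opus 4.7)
The plan is to translate the claim into a statement about the Jacobian of the curve associated with $L$, exploiting the fact that in characteristic $p$ the $p$-primary torsion of an abelian variety over $\bar{\F}_q$ is a cofinitely generated $\Z_p$-module. Let $\mathcal{X}/\F_q$ denote the smooth projective geometrically irreducible curve whose function field is $L$, and let $J$ denote its Jacobian, an abelian variety of dimension $g$ equal to the genus of $\mathcal{X}$. Since $\cL = L\cdot \kappa$ is the constant $\Z_p$-extension, one has $L_n = L\cdot \F_{q^{p^n}}$ and $J(L_n) = J(\F_{q^{p^n}})$.

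First I would reduce to controlling the ordinary $p$-primary class groups. Since $\cL/L$ is unramified with Galois group $\Gamma \cong \Z_p$, every prime of $L$ is finitely decomposed in $\cL$ (its decomposition subgroup is a nontrivial closed subgroup of $\Z_p$, hence of finite index), so the set $S(\cL)$ is finite. The natural surjection $\op{Cl}_p(L_n)\twoheadrightarrow X_n \cong \op{Cl}_p(\cO_{S,L_n})$ therefore has kernel of order uniformly bounded in $n$, and it suffices to control the growth of $|\op{Cl}_p(L_n)|$.

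The heart of the argument is the identification $\op{Cl}_p(L_n) \simeq J(\F_{q^{p^n}})[p^\infty]$, which comes from the degree exact sequence for $\op{Pic}(\mathcal{X}_{L_n})$. The key geometric input is that in characteristic $p$ one has $J[p^\infty](\bar{\F}_q) \simeq (\Q_p/\Z_p)^f$, where $f \le g$ is the $p$-rank of $J$, so $J(\F_{q^{p^n}})[p^\infty]$ has $\Z_p$-corank at most $f$ for every $n$. The Weil conjectures give $|J(\F_{q^{p^n}})| = \prod_{i=1}^{2g}(1-\alpha_i^{p^n})$ for the Frobenius eigenvalues $\alpha_i$, and a short case analysis (according to whether $v_p(\alpha_i)>0$, or $\alpha_i$ is a $p$-adic unit congruent to $1$ modulo $p$ or not) yields $v_p(1-\alpha_i^{p^n}) = O(n)$ in every case. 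Hence $e_n^S := v_p(|X_n|) = O(n)$, which, combined with Iwasawa's growth formula $e_n^S = \mu_p^S(L)\cdot p^n + \lambda_p^S(L)\cdot n + \nu$ for $n\gg 0$ (whose proof is purely $\Lambda$-theoretic and applies verbatim in the function field setting), forces $\mu_p^S(L) = 0$.

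The main technical obstacle is the bookkeeping required to pass cleanly between the norm-compatible inverse system $X = \varprojlim_n X_n$ governing the Iwasawa $\mu$-invariant and the covariant direct limit $\varinjlim_n J(\F_{q^{p^n}})[p^\infty]$ sitting inside $J[p^\infty](\bar{\F}_q)$, and in checking that the kernel of the degree map together with the finite corrections introduced by quotienting by classes of primes in $S(L_n)$ contribute only uniformly bounded quotients as $n$ varies. Once these identifications are in place, the conclusion is a short consequence of the elementary estimate on Frobenius eigenvalues above.
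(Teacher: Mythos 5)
The paper itself does not prove this; it simply cites Rosen. Your proposal reconstructs what is essentially Rosen's argument, and the arithmetic core of it is sound: the identification $\op{Cl}_p(L_n)\simeq J(\F_{q^{p^n}})[p^\infty]$ via the degree exact sequence, the formula $\#J(\F_{q^{p^n}})=\prod_{i=1}^{2g}(1-\alpha_i^{p^n})$, and the case analysis showing $v_p(1-\alpha_i^{p^n})=O(n)$ (using that the $\alpha_i$ are algebraic integers of archimedean absolute value $q^{1/2}>1$, hence never roots of unity, so $\log_p\alpha_i\neq 0$ when $\alpha_i$ is a unit reducing to $1$) are all correct and give $e_n^S=O(n)$.

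The gap is in the sentence asserting that the Iwasawa growth formula is ``purely $\Lambda$-theoretic and applies verbatim in the function field setting.'' It is not purely $\Lambda$-theoretic: the standard proof (Washington, Ch.~13) derives the control isomorphism between $X_n$ and $X/\omega_{n}X$ (up to bounded error) precisely from the hypothesis that some prime ramifies in $K_\infty/K$ and is eventually totally ramified. In the constant $\Z_p$-extension $\cL/L$ nothing ramifies, so that argument collapses; indeed, without the split-at-$S$ condition the group $\op{Gal}(H_p(L_n)/L_n)$ is \emph{infinite}, since it has $\op{Gal}(\cL/L_n)\cong\Z_p$ as a quotient, which is why the theorem is stated for $X_n=\op{Gal}(H_p^S(L_n)/L_n)$ in the first place. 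Making $X_n$ finite and proving the control theorem in the unramified-but-split-at-$S$ situation is a genuinely different (if not harder) piece of class field theory, and it is part of what Rosen proves. So you are in effect importing Rosen's growth formula as a black box while describing it as formal. Either cite that growth formula explicitly, or take the shorter route: observe that $J[p^\infty](\bar{\F}_q)\cong(\Q_p/\Z_p)^f$ with $f\le g$ bounds the $p$-rank of $\op{Cl}_p(L_n)$ by $f$ uniformly in $n$, so $X=\varprojlim X_n$ is a finitely generated $\Z_p$-module; this already yields $\mu=0$ (and $\lambda\le f$) from the structure theorem without invoking the growth formula at all, sidestepping the control-theorem issue entirely.

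Two smaller points: the assertion that the kernel of $\op{Cl}_p(L_n)\twoheadrightarrow\op{Cl}_p(\cO_{S,L_n})$ has \emph{order} uniformly bounded in $n$ is not justified (only the number of generators is uniformly bounded, since $|S(L_n)|$ is), though this does not affect your argument because you only use the inequality $|X_n|\le|\op{Cl}_p(L_n)|$; and ``congruent to $1$ modulo $p$'' should read ``modulo the maximal ideal above $p$'' in the field generated by $\alpha_i$.
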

\begin{proof}
    The vanishing of the $\mu$-invariant follows from results of Rosen (cf. \cite[p.293, l.-2]{rosenmain}).
\end{proof}
\subsection{The vanishing of the $\mu$-invariant of the fine Selmer group}
\par At the end of this subsection, we give a proof of Theorem \ref{MAIN thm}. We define the fine Selmer group associated to the residual representation. Let $S$ be a finite set of primes containing $\{\p, \infty\}$ and the primes at which $\phi$ has bad reduction. For $v\in S$, set 
\[J_v(\phi[\p]/\cF):=\prod_{w\in v(\cF)} H^1\left(\cF_{w}, \phi[\p]\right).\]
\begin{definition}\label{def of residual fine Selmer}
    With respect to the above notation, the \emph{residual fine Selmer group} is defined as follows
\[\op{Sel}_0(\phi[\p]/\cF):=\op{ker}\left(H^1(F_S/\cF, \phi[\p])\xrightarrow{\bar{\Phi}} \bigoplus_{v\in S} J_v(\phi[\p]/\cF)\right),\] where $\bar{\Phi}$ is the product of restriction maps.
\end{definition}
By an argument identical to that in the proof of Proposition \ref{fine Selmer indep}, $\op{Sel}_0(\phi[\p]/\cF)$ is independent of the choice of the set of primes $S$. 

\par Let $k$ be a Galois extension of $F$ contained in $F_S$ or a separable extension of $F_{\fl}$ for some prime $\fl$. Denote by $\F_{\p}$ the residue field $A/\p$. The Kummer sequence 
\[0\rightarrow \phi[\p]\rightarrow \phi [\p^\infty]\xrightarrow{\phi_{\p}} \phi[\p^\infty]\rightarrow 0\] is a short exact sequence of $\op{G}_k$-modules, associated to which we have a short exact sequence
\begin{equation}\label{kummer}0\rightarrow H^0(k, \phi[\p^\infty])\otimes_{\cO} \F_{\p}\rightarrow H^1(k, \phi[\p])\xrightarrow{\beta_k} H^1(k, \phi[\p^\infty])[\p]\rightarrow 0.\end{equation}Given a prime $v\in S$, and $w\in v(\cF)$, let 
\[h_w:H^1(\cF_{ w}, \phi[\p])\rightarrow H^1(\cF_{w}, \phi[\p^\infty])[\p]\]denote the natural map in the Kummer sequence \eqref{kummer}. Set
\[h_v: J_v( \phi[\p]/\cF)\rightarrow J_v(\phi[\p^\infty]/\cF)[\p]\] to be the product of maps
\[\prod_{w\in v(\cF)} h_w: \prod_{w\in v(\cF)} H^1(\cF_{ w}, \phi[\p])\rightarrow \prod_{w\in v(\cF)} H^1(\cF_{w}, \phi[\p^\infty])[\p].\] Denote by $h$ the direct sum of the maps $h_v$, as $v$ ranges over $S$
\[h:\bigoplus_{v\in S}J_v(\phi[\p]/\cF) \longrightarrow \bigoplus_{v\in S}J_v( \phi[\p^\infty]/\cF)[\p].\]
On the other hand, the map 
\[\beta=\beta_F:H^1(F_S/\cF, \phi[\p])\rightarrow H^1(F_S/\cF, \phi[\p^\infty])\] restricts to a natural map 
\[\gamma: \op{Sel}_0(\phi[\p]/\cF)\xrightarrow{\gamma} \op{Sel}_0(\phi[\p^\infty]/\cF)[\p],\] that fits into a commutative diagram
\begin{equation}\label{fdiagram}
\begin{tikzcd}[column sep = small, row sep = large]
0\arrow{r} & \op{Sel}_0(\phi[\p]/\cF) \arrow{r}\arrow{d}{\gamma} & H^1(F_S/\cF, \phi[\p])\arrow{r} \arrow{d}{\beta} & \operatorname{im}(\bar{\Phi})\arrow{r} \arrow{d}{h'} & 0\\
0\arrow{r} & \op{Sel}_0(\phi[\p^\infty]/\cF)[\p]\arrow{r} & H^1(F_S/\cF, \phi[\p^{\infty}])[\p] \arrow{r}  &\bigoplus_{v\in S} J_v(\phi[\p^{\infty}]/\cF)[p].
\end{tikzcd}
\end{equation}
In the above diagram $h'$ is the restriction of $h$ to $\op{im}(\bar{\Phi})$.

\begin{lemma}\label{lemma 4.3}
    The natural map
    \[\op{Sel}_0(\phi[\p]/\cF)\xrightarrow{\gamma} \op{Sel}_0(\phi[\p^\infty]/\cF)[\p]\]has finite kernel and cokernel, whose size satisfy the upper bounds
    \[\begin{split}&  \# \op{ker}\gamma \leq  \# \left(H^0(\cF, \phi[\p^\infty])\otimes_{\cO}\F_{\p}\right)\leq |\F_\p|^r, \\ & \# \op{cok}\gamma\leq \prod_{w\in S(\cF)} \# \left(H^0(\cF_{w}, \phi[\p^\infty])\otimes_{\cO} \F_\p\right)\leq |\F_\p|^{\#S(\cF)r}.\end{split}\]
\end{lemma}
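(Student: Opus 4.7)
The plan is to apply the snake lemma to diagram \eqref{fdiagram}, identify the resulting vertical kernels and cokernels via the global and local Kummer sequences, and finally bound cardinalities using $\phi[\p^\infty]\simeq (\cK/\cO)^r$.

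First I will exploit \eqref{kummer} with $k=\cF$, which shows that $\beta$ is surjective with $\op{ker}\beta = H^0(\cF,\phi[\p^\infty])\otimes_\cO \F_\p$. The local counterpart of \eqref{kummer}, applied with $k=\cF_w$ for each $w$ above a prime $v\in S$, similarly yields $\op{ker} h_w = H^0(\cF_w,\phi[\p^\infty])\otimes_\cO \F_\p$ together with the surjectivity of $h_w$. Since every prime of $F$ is finitely decomposed in the constant $\Z_p$-extension, the set $v(\cF)$ is finite for each $v\in S$, so
\[\op{ker} h = \bigoplus_{w\in S(\cF)} H^0(\cF_w,\phi[\p^\infty])\otimes_\cO \F_\p\]
is a finite group.

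Next, I apply the snake lemma to \eqref{fdiagram}. The top row is short exact as displayed; to accommodate the fact that the bottom row is only left exact, I replace its right-hand term by the image of the preceding horizontal map, which does not affect $\op{ker} h'$. Combined with the surjectivity of $\beta$, the snake lemma then produces an exact sequence
\[0\to \op{ker}\gamma \to \op{ker}\beta \to \op{ker} h' \to \op{cok}\gamma \to 0.\]
In particular $\op{ker}\gamma$ embeds into $H^0(\cF,\phi[\p^\infty])\otimes_\cO \F_\p$, and $\op{cok}\gamma$ is realized as a quotient of $\op{ker} h'\subseteq \op{ker} h$.

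Finally, for the numerical estimates I use the following uniform fact: any $\cO$-submodule $H$ of $\phi[\p^\infty]\simeq (\cK/\cO)^r$ has Pontryagin dual $H^\vee$ that is a quotient of $\cO^r$, hence generated by at most $r$ elements; tensoring $H$ with $\F_\p$ kills its divisible part and leaves a quotient of $\F_\p^r$, of cardinality at most $|\F_\p|^r$. Applied globally at $\cF$ and locally at each $w\in S(\cF)$, this yields both stated bounds. The only step requiring real care is the snake-lemma setup in view of the asymmetric exactness of the two rows of \eqref{fdiagram}; once that is handled by the truncation described above, the remainder of the argument is a routine diagram chase combined with the cardinality bookkeeping just sketched.
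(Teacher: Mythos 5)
Your proposal is correct and follows essentially the same route as the paper: apply the snake lemma to diagram \eqref{fdiagram} to obtain the four-term exact sequence $0\to \ker\gamma\to\ker\beta\to\ker h'\to\operatorname{cok}\gamma\to 0$, then read off $\ker\beta$ and $\ker h$ from the global and local Kummer sequences \eqref{kummer}, and finally bound cardinalities via $\phi[\p^\infty]\simeq(\cK/\cO)^r$. The extra care you take to truncate the bottom row of \eqref{fdiagram} (which the paper displays as only left-exact) before invoking the snake lemma, and to note that the surjectivity of $\beta$ is what forces the sequence to terminate at $\operatorname{cok}\gamma$, fills in details that the paper leaves implicit; this is a refinement in exposition rather than a different argument.
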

\begin{proof}
    From the commutative diagram \eqref{fdiagram}, we obtain the exact sequence 
    \[0\rightarrow \op{ker}\gamma \rightarrow \op{ker}\beta\rightarrow \op{ker}h'\rightarrow \op{cok}\gamma \rightarrow 0.\]
    Therefore, in particular, we find that 
    \[\begin{split}&  \# \op{ker}\gamma \leq \#\op{ker}\beta= \# \left(H^0(\cF, \phi[\p^\infty])\otimes_{\cO}\F_{\p}\right), \\ & \# \op{cok}\gamma\leq \#\op{ker}h'\leq \# \op{ker} h\leq \prod_{w\in S(\cF)} \# \left(H^0(\cF_{w}, \phi[\p^\infty])\otimes_{\cO} \F_\p\right).\end{split}\]
    Since $\phi[\p^\infty]\simeq \left(\mathcal{K}/\cO\right)^r$, we find that 
    \[\# \left(H^0(\mathcal{F}, \phi[\p^\infty])\otimes_{\cO}\F_{\p}\right), \# \left(H^0(\mathcal{F}_w, \phi[\p^\infty])\otimes_{\cO}\F_{\p}\right)\leq |\F_\p|^r.\] The result has thus been proven.
\end{proof}

\begin{proposition}\label{mu equals 0 prop}
    The following are equivalent
    \begin{enumerate}
        \item \label{p1 mu equals 0 prop} $\op{Sel}_0(\phi[\p^\infty]/\cF)$ is a cofinitely generated cotorsion $\Lambda_{\cO}$-module with $\mu_{\p}(\phi)=0$. 
        \item \label{p2 mu equals 0 prop}$\op{Sel}_0(\phi[\p]/\cF)$ is finite. 
    \end{enumerate}
    Furthermore, if the above equivalent assertions are satisfied, then, 
    \[\lambda_{\p}(\phi)\leq \op{dim}_{\kappa}\op{Sel}_0(\phi[\p]/\cF)+\sum_{w\in S(\cF)} \dim_{\F_\p}\left(H^0(\cF_{w}, \phi[\p^\infty])\otimes_{\cO} \F_{\p}\right).\]
\end{proposition}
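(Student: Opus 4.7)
The plan is to deduce this proposition as a direct consequence of Lemma \ref{main lemma 1} applied to $M := \op{Sel}_0(\phi[\p^\infty]/\cF)$, combined with the control on $M[\p]$ provided by Lemma \ref{lemma 4.3}. Since $\phi[\p^\infty]$ is $\p$-primary as an $\cO$-module, so is $M$; and because $(\varpi) = \p\cO$, we have $M[\varpi] = \op{Sel}_0(\phi[\p^\infty]/\cF)[\p]$. Thus Lemma \ref{main lemma 1} applies and shows that condition (\ref{p1 mu equals 0 prop}) is equivalent to the finiteness of $\op{Sel}_0(\phi[\p^\infty]/\cF)[\p]$.

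To close the equivalence with (\ref{p2 mu equals 0 prop}), I will invoke Lemma \ref{lemma 4.3}: the natural map $\gamma: \op{Sel}_0(\phi[\p]/\cF) \to \op{Sel}_0(\phi[\p^\infty]/\cF)[\p]$ has finite kernel and cokernel. Both source and target are $\F_\p$-vector spaces, and so one is finite if and only if the other is. This immediately gives (\ref{p1 mu equals 0 prop}) $\Leftrightarrow$ (\ref{p2 mu equals 0 prop}).

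For the bound on $\lambda_\p(\phi)$, assume the equivalent conditions. Lemma \ref{main lemma 1} provides
\[
\lambda_\p(\phi) \leq \dim_{\F_\p}\op{Sel}_0(\phi[\p^\infty]/\cF)[\p].
\]
The four-term exact sequence
\[
0 \to \op{ker}\gamma \to \op{Sel}_0(\phi[\p]/\cF) \to \op{Sel}_0(\phi[\p^\infty]/\cF)[\p] \to \op{cok}\gamma \to 0
\]
then yields
\[
\dim_{\F_\p}\op{Sel}_0(\phi[\p^\infty]/\cF)[\p] \leq \dim_{\F_\p}\op{Sel}_0(\phi[\p]/\cF) + \dim_{\F_\p}\op{cok}\gamma.
\]
From the snake lemma argument in the proof of Lemma \ref{lemma 4.3}, $\op{cok}\gamma$ is a quotient of $\op{ker}h' \subseteq \op{ker}h = \bigoplus_{w \in S(\cF)} \op{ker}h_w$, and the Kummer sequence \eqref{kummer} identifies each local kernel $\op{ker}h_w$ with $H^0(\cF_w, \phi[\p^\infty])\otimes_{\cO}\F_\p$. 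Assembling these inequalities gives the stated bound.

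The argument is essentially bookkeeping once Lemmas \ref{main lemma 1} and \ref{lemma 4.3} are in hand, so I do not anticipate a substantive obstacle. The one point requiring care is the passage from the multiplicative cardinality bound on $\op{cok}\gamma$ in Lemma \ref{lemma 4.3} to the additive $\F_\p$-dimension bound appearing in the proposition; this is legitimate because every module in sight is an $\F_\p$-vector space and $\op{cok}\gamma$ embeds as a subquotient of the direct sum $\bigoplus_{w \in S(\cF)} H^0(\cF_w, \phi[\p^\infty])\otimes_{\cO}\F_\p$, whose $\F_\p$-dimension is exactly the sum $\sum_{w \in S(\cF)} \dim_{\F_\p}\!\left(H^0(\cF_w, \phi[\p^\infty])\otimes_{\cO}\F_\p\right)$.
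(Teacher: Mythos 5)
Your argument is correct and is essentially the paper's own proof: reduce condition (\ref{p1 mu equals 0 prop}) to the finiteness of $M[\varpi]$ via Lemma \ref{main lemma 1}, transfer finiteness back and forth along $\gamma$ using the finite kernel and cokernel established in Lemma \ref{lemma 4.3}, and then combine $\lambda_\p(\phi)\leq\dim_{\F_\p}M[\varpi]$ with the four-term exact sequence bounding $\dim_{\F_\p}\op{cok}\gamma$ by $\sum_{w\in S(\cF)}\dim_{\F_\p}\left(H^0(\cF_w,\phi[\p^\infty])\otimes_{\cO}\F_\p\right)$. You merely spell out a few routine steps (e.g.\ that finiteness of source and target of a map with finite kernel and cokernel are equivalent) that the paper leaves implicit.
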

\begin{proof}
    Let $M$ denote the Selmer group $\op{Sel}_0(\phi[\p^\infty]/\cF)$. Lemma \ref{main lemma 1} asserts that $M$ is a cofinitely generated, cotorsion as a $\Lambda_{\cO}$-module with $\mu_{\p}(M)=0$ if and only if $M[\varpi]$ is finite. On the other hand, it follows from Lemma \ref{lemma 4.3} that $M[\varpi]$ is finite if and only if $\op{Sel}_0(\phi[\p]/\cF)$ is finite. This proves that the conditions \eqref{p1 mu equals 0 prop} and \eqref{p2 mu equals 0 prop} are equivalent. In order to obtain the upper bound on $\lambda_{\p}(\phi)$ we recall from Lemma \ref{main lemma 1} that $\lambda_{\p}(\phi)\leq \dim_{\F_\p} M[\varpi]$. Note that $M[\varpi]=\op{Sel}_0(\phi[\p^\infty]/\cF)[\p]$; it follows from Lemma \ref{lemma 4.3} that 
    \[\begin{split}\dim_{\F_{\p}}M[\varpi]\leq & \dim_{\F_\p}\op{Sel}_0(\phi[\p]/\cF)+\dim_{\F_\p}\op{cok}\gamma, \\ 
    \leq & \dim_{\F_\p}\op{Sel}_0(\phi[\p]/\cF)+\sum_{w\in S(\cF)} \dim_{\F_\p}\left(H^0(\cF_{w}, \phi[\p^\infty])\otimes_{\cO} \F_{\p}\right). 
 \end{split}\]
 This concludes the proof of the result. 
\end{proof}

We demonstrate conditions under which the terms $\dim_{\F_\p}\left(H^0(\cF_{w}, \phi[\p^\infty])\otimes_{\cO} \F_{\p}\right)$ vanish.
\begin{lemma}\label{NSW lemma}
Let $G$ and $M$ be finite abelian groups of $p$-power order such that $G$ acts on $M$. Suppose that $M^G=0$, then $M=0$.
\end{lemma}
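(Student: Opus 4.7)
The plan is to prove this by a standard orbit-counting / class-equation argument for $p$-group actions, noting that the hypothesis that $G$ is abelian is not actually needed; only that $|G|$ is a power of $p$. Let me sketch how I would structure the argument.

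First, since $M$ is a finite abelian group of $p$-power order, $|M| = p^n$ for some $n \geq 0$, and similarly $|G| = p^m$. The $G$-action on $M$ (viewed simply as an action on the underlying set) partitions $M$ into $G$-orbits. By the orbit-stabilizer theorem, the size of each orbit divides $|G|$, hence is a power of $p$. The orbits of size $1$ are exactly the singletons $\{x\}$ with $x \in M^G$.

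Next, I would invoke the class-equation style decomposition
\[
|M| \;=\; |M^G| + \sum_{\mathcal{O}} |\mathcal{O}|,
\]
where the sum runs over the non-trivial orbits. Each term $|\mathcal{O}|$ in the sum is a positive power of $p$, hence divisible by $p$. Reducing modulo $p$ gives the congruence $|M| \equiv |M^G| \pmod{p}$.

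Now I would apply the hypothesis $M^G = 0$. Since $M^G$ is the trivial subgroup, $|M^G| = 1$, so the congruence becomes $|M| \equiv 1 \pmod{p}$. On the other hand, $|M| = p^n$, so $|M| \equiv 0 \pmod{p}$ whenever $n \geq 1$. These two conditions are incompatible unless $n = 0$, which forces $|M| = 1$, i.e., $M = 0$. There is no real obstacle here; the argument is entirely standard orbit-counting for $p$-group actions, and the abelian hypothesis on $G$ plays no role beyond ensuring $|G|$ is a prime power (which is what is used).
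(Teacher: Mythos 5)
Your proof is correct, and in fact slightly more general than the statement since it never uses commutativity of $G$, only that $|G|$ is a power of $p$. The paper itself does not write out an argument: it simply cites \cite[Proposition~1.6.12]{NSW}, which is the standard fixed-point lemma for $p$-groups acting on (pro-)$p$ modules. Your orbit-counting argument --- orbit sizes divide $|G|$ and hence are $p$-powers, singleton orbits are precisely the fixed points, so $|M| \equiv |M^G| \pmod{p}$, and then $M^G = 0$ forces $|M| \equiv 1 \pmod p$, which is incompatible with $|M| = p^n$ unless $n = 0$ --- is exactly the elementary proof that underlies the cited result in the finite case. The paper's citation buys brevity; your version keeps the lemma self-contained and makes visible that the hypothesis that $G$ be abelian plays no role.
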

\begin{proof}
The result follows from \cite[Proposition 1.6.12]{NSW}.
\end{proof}

\begin{proposition}\label{H0 vanishing Prop}
    Assume that $v$ is a prime at which 
    \[H^0(F_v, \phi[\p])=0.\]
    Then, for all primes $w\in v(\cF)$, we have that 
    \[H^0(\cF_w, \phi[\p^\infty])=0.\]
\end{proposition}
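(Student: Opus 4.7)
The plan is to reduce the $\p^\infty$-statement to the analogous statement for the $\p$-torsion $\phi[\p]$, and then to invoke Lemma \ref{NSW lemma} applied to the natural action of $\op{Gal}(\cF_w/F_v)$ on the $G_{\cF_w}$-invariants of $\phi[\p]$. The key structural input is that, in the present setup, $\phi[\p^\infty]$ is a $p$-primary abelian group: the residue characteristic of $\F_\p = A/\p$ agrees with $\mathrm{char}(F)$, which is the same prime $p$ used to form the constant $\Z_p$-extension $\cF/F$. Thus one and the same prime governs both the torsion structure of $\phi[\p^\infty]$ and the pro-$p$ Galois group $\Gamma$.

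First, I would observe that the $G_{\cF_w}$-invariant submodule $H^0(\cF_w, \phi[\p^\infty])$ is itself $\p$-primary, hence vanishes if and only if its $\p$-torsion submodule does. But
\[H^0(\cF_w, \phi[\p^\infty])[\p] = H^0(\cF_w, \phi[\p]),\]
so it is enough to show that $H^0(\cF_w, \phi[\p]) = 0$.

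Next, I would set $M := H^0(\cF_w, \phi[\p])$ and $G := \op{Gal}(\cF_w/F_v)$. The module $M \subseteq \phi[\p] \simeq \F_\p^{\oplus r}$ is a finite abelian $p$-group. Because primes of $F$ are finitely decomposed in $\cF$, the decomposition group at $w$ is an open (in particular nontrivial) subgroup of $\Gamma \simeq \Z_p$, so $G \simeq \Z_p$; since $M$ is finite, the $G$-action factors through a finite $p$-group quotient $\bar{G}$. Unwinding the Galois correspondence,
\[M^{\bar G} = \phi[\p]^{G_{F_v}} = H^0(F_v, \phi[\p]),\]
which is zero by hypothesis. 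Lemma \ref{NSW lemma}, applied to the finite $p$-groups $\bar G$ and $M$, then forces $M = 0$, which gives the desired conclusion.

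The entire argument is essentially a one-line application of Lemma \ref{NSW lemma}. The only piece of genuine bookkeeping, rather than a serious obstacle, is ensuring that the residue characteristic of $\p$ matches the prime governing the $\Z_p$-extension, so that both the acting (profinite) group and the module $M$ are $p$-groups after reducing to a finite quotient. In the present paper this compatibility is automatic, since the constant $\Z_p$-extension is built from $\op{Gal}(\bar\F_q/\F_q)$ with $\mathrm{char}(\F_q) = p$.
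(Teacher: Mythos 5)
Your proof is correct and follows essentially the same route as the paper: reduce to the residual module $M := H^0(\cF_w, \phi[\p])$, note the $\Gamma$-action factors through a finite $p$-group quotient, and apply Lemma~\ref{NSW lemma} to conclude $M=0$. The paper is terser — it leaves the reduction from $\phi[\p^\infty]$ to $\phi[\p]$ and the fact that both $M$ and the acting quotient are $p$-groups implicit — whereas you spell these out, which is a fair expansion but not a different argument.
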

 \begin{proof}
     Let $M:=H^0(\cF_w, \phi[\p])$ and $\Gamma :=\op{Gal}(\cF_w/F_v)$, and identify $M^{\Gamma}$ with $H^0(\cF_v, \phi[\p])$. By assumption, $M^{\Gamma}=0$. The action of $\Gamma$ is continuous and $M$ is finite. Thus, there is a finite index subgroup $\Gamma_0$ of $\Gamma$ such that $M=M^{\Gamma_0}$. It follows from Lemma \ref{NSW lemma} that $M=M^{\Gamma/\Gamma_0}=0$. In particular, this implies that \[H^0(\cF_w, \phi[\p^\infty])=0.\]
 \end{proof}

\begin{proposition}\label{MAIN prop}
    With respect to above notation, the residual Selmer group $\op{Sel}_0(\phi[\p]/\cF)$ is finite.
\end{proposition}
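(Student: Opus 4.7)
The plan is to prove finiteness of $\op{Sel}_0(\phi[\p]/\cF)$ by reducing, via an auxiliary Galois-trivializing extension, to Rosen's theorem (Theorem~\ref{ROSEN thm}) on the vanishing of $\mu$ for class group towers. Set $F' := F(\phi[\p])$; by the generic characteristic hypothesis $\phi[\p]$ cuts out a finite separable Galois extension of $F$. Let $L$ denote the constant $\Z_p$-extension of $F'$. Writing $\kappa$ and $\kappa'$ for the constant $\Z_p$-extensions of $\F_q$ and of the constant field $\F_{q'}$ of $F'$ inside $\bar{\F}_q$, uniqueness of the $\Z_p$-extension inside $\bar{\F}_q$ yields $\kappa' = \kappa \cdot \F_{q'}$, so $\cF = F\kappa \subseteq F'\kappa' = L$ and $[L:\cF]$ divides $[F':F]$, hence is finite. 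The inflation-restriction sequence for $\op{Gal}(L/\cF)$ acting on $H^1(F_S/L,\phi[\p])$ shows that the restriction map $\op{Sel}_0(\phi[\p]/\cF)\to \op{Sel}_0(\phi[\p]/L)^{\op{Gal}(L/\cF)}$ has kernel contained in $H^1(\op{Gal}(L/\cF),\phi[\p])$, which is finite because both factors are finite. It thus suffices to show that $\op{Sel}_0(\phi[\p]/L)$ is finite.

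By construction $\op{Gal}(F^{\op{sep}}/L)$ acts trivially on $\phi[\p]\cong \F_{\p}^r$, so $H^1(F_S/L,\phi[\p]) = \op{Hom}_{\op{cts}}(\op{Gal}(F_S/L),\F_{\p}^r)$. The strict local condition at a prime $w\in S(L)$ forces the corresponding homomorphism to vanish on the entire decomposition group at $w$; this means the cut-out extension is unramified at $w$ and $w$ splits completely. Combined with the unramified-outside-$S$ condition built into $F_S$, any class in $\op{Sel}_0(\phi[\p]/L)$ therefore factors through $X := \op{Gal}(H^{S'}_\infty/L)$, the Galois group of the maximal abelian pro-$p$ extension of $L$ that is unramified at \emph{every} prime of $L$ and splits completely at every prime above $S' := S(F')$. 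This yields an injection
\[
\op{Sel}_0(\phi[\p]/L) \hookrightarrow \op{Hom}_{\Z_p}(X,\F_{\p}^r) = \op{Hom}_{\F_p}(X/pX,\F_{\p}^r).
\]

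Finally, $X$ is precisely the Iwasawa module for the $S'$-class group tower along the constant $\Z_p$-extension $L/F'$. Since $S'$ is a non-empty finite set of primes of $F'$, each finite-layer class group $\op{Cl}^{S'}_p(L_n)$ is finite, so $X$ is a finitely generated torsion $\Lambda_{\Z_p}$-module. Rosen's theorem (Theorem~\ref{ROSEN thm}) gives $\mu_p^{S'}(F')=0$, and so by the structure theorem (cf.\ Proposition~\ref{structure thm}, applied over $\Z_p$) $X$ is finitely generated as a $\Z_p$-module; in particular $X/pX$ is finite, which finishes the proof. The main technical obstacle is the bookkeeping: correctly identifying $\cF\cdot F(\phi[\p])$ with the constant $\Z_p$-extension of $F(\phi[\p])$ and matching the set of bad primes $S$ across the successive base changes so that Rosen's theorem applies verbatim. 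Once these verifications are in place, the argument is a direct appeal to the vanishing of $\mu$ in the function field setting.
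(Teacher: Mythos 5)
Your proof is correct and follows essentially the same strategy as the paper: pass to $F(\phi[\p])$ and its constant $\Z_p$-extension, use inflation--restriction to reduce to homomorphisms from $\op{Gal}(F_S/\cdot)$ that factor through the Iwasawa module of the everywhere-unramified, split-at-$S$ abelian pro-$p$ tower, and then invoke Rosen's $\mu=0$ theorem together with the structure theory to conclude this module is finitely generated over $\Z_p$ and hence has finite $p$-elementary quotient. The only cosmetic difference is that you introduce $\op{Sel}_0(\phi[\p]/L)$ as an explicit intermediate object and check $[L:\cF]<\infty$ directly, whereas the paper bounds the image of the restriction map without naming that Selmer group; the underlying argument is identical.
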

\begin{proof}
    Set $L:=F(\phi[\p])$ to be the field extension of $F$ in $F^{\op{sep}}$ that is fixed by the kernel of $\rho_{\phi, \p}:\op{G}_F\rightarrow \op{Aut}(\phi[\p])$. Denote by $\cL$ the constant $\Z_p$-extension of $L$ and identify $\cL$ with the composite $\cF\cdot L$. Let $H_S(\cL)$ be the maximal abelian unramified prop-extension of $\cL$ in which the all primes of $S(\cL)$ are completely split. Set $X_S(\cL)$ to denote the Galois group $\op{Gal}(H_S(\cL)/\cL)$. Theorem \ref{ROSEN thm} asserts that the $\mu$-invariant of $X_S(\cL)$ is equal to $0$. It thus follows from the structure theory of $\Lambda_{\Z_p}$-modules that $X_S(\cL)$ is finitely generated $\Z_p$-module. Let $\bar{H}_S(\cL)$ be the maximal $p$-elementary extension of $\cL$ that is contained in $H_S(\cL)$. Set $\bar{X}_S(\cL):=\op{Gal}(\bar{H}_S(\cL)/\cL)$, and identify $\bar{X}_S(\cL)$ with $X_S(\cL)/p X_S(\cL)$.
    \par Set $G:=\op{Gal}(\cL/\cF)$, note that the restriction of $\rho_{\phi, \p}$ to $\op{G}_L$ is trivial, hence, $\phi[\p]^{\op{G}_L}=\phi[\p]$. Consider the inflation-restriction sequence
    \[0\rightarrow H^1(G, \phi[\p])\xrightarrow{inf} H^1(F_S/\cF, \phi[\p])\xrightarrow{res} H^1(F_S/\cL, \phi[\p]).\] The module $\phi[\p]$ is a trivial module over $\op{G}_{\cL}$, and hence 
    \[H^1(F_S/\cL, \phi[\p])\simeq \op{Hom}(\op{Gal}(F_S/\cL), \F_\p^r).\]
    For $f\in \op{Sel}_0(\phi[\p]/\cF)$, the restriction of $f$ to $\op{Gal}(F_S/\cL)$ is unramified at all primes and split at all primes of $S(\cL)$. Therefore, $\op{res}(f)$ factors as a homomorphism of groups
    \[\op{res}(f):\bar{X}_S(\cL)\rightarrow \F_{\p}^r.\] Since $X_S(\cL)$ is finitely generated as a $\Z_p$-module, $\bar{X}_S(\cL)$ is finite. Thus, we find that $\op{res}\left(\op{Sel}_0(\phi[\p]/\cF)\right)$ is finite. On the other hand, since $G$ is finite, the cohomology group $H^1(G, \phi[\p])$ is finite. Therefore, we have shown that $\op{Sel}_0(\phi[\p]/\cF)$ is finite. 
\end{proof}
We now prove Theorem \ref{MAIN thm}.
\begin{proof}
    Part \eqref{c1 MAIN thm} and \eqref{c3 MAIN thm} of the result are a direct consequence of Proposition \ref{mu equals 0 prop} and Proposition \ref{MAIN prop}. On the other hand, part \eqref{c2 MAIN thm} is an easy consequence of part \eqref{c1 MAIN thm} and the structure theory of $\Lambda_{\cO}$-modules. In greater detail, since $Y(\phi[\p^\infty]/\cF)$ is finitely generated and torsion as a module over $\Lambda_{\cO}$ with $\mu=0$, it follows that it is pseudo-isomorphic to a free $\cO$-module of rank $\lambda_{\p}(\phi)$. Therefore, it is finitely generated as an $\cO$-module with $\cO$-rank equal to $\lambda_p(\phi)$.
\end{proof}
\subsection{An illustrative example}\label{example section} Let us illustrate Theorem \ref{MAIN thm} through an example. The Carlitz module over $F$ is the rank $1$ Drinfeld module defined by taking $\phi_T:=T+\tau$. For simplicity, we assume that $p$ is odd and $q=p$. Letting $\mathfrak{p}:=(T)$, it is easy to see that $\phi$ has good reduction at all primes and thus one can take the set $S$ in Theorem \ref{MAIN thm} to be $\{\p, \infty\}$. Thus the Theorem asserts that $\mu_{\p}(\phi)=0$ and that 
\[\begin{split}\lambda_{\p}(\phi) & \leq \op{dim}_{\F_\p}\op{Sel}_0(\phi[\p]/\cF) \\ +& \dim_{\F_\p}\left(H^0(\cF_{\p}, \phi[\p^\infty])\otimes_{\cO} \F_{\p}\right)+ \dim_{\F_\p}\left(H^0(\cF_{\infty}, \phi[\p^\infty])\otimes_{\cO} \F_{\p}\right).\end{split}\]
We identify $\F_\p$ with the field with $p$ elements. Note that in the above formula, 
\[\dim_{\F_\p}\left(H^0(\cF_{w}, \phi[\p^\infty])\otimes_{\cO} \F_{\p}\right)\leq \op{dim}_{\F_\p} \phi[\p]=1\] for $w\in \{\p, \infty\}$. The extension $\cF_\p$ is an unramified extension of $F_\p$. Since $\phi_T(x)/x=T+x^{p-1}$ is Eisenstein at $\p$, it follows that the character associated to $\phi[\p]$ is ramified at $\p$. As a result, $\phi[\p]$ is nontrivial as a module over the absolute Galois group of $\cF_\p$. Therefore, one has that 
\[\dim_{\F_\p}\left(H^0(\cF_{\p}, \phi[\p^\infty])\otimes_{\cO} \F_{\p}\right)=\dim_{\F_\p}\left(H^0(\cF_{\p}, \phi[\p])\right)=0.\] On the other hand, it follows from \cite[Theorem 7.1.13]{papibook} that $\phi[\p]$ is totally ramified at $\infty$. A similar argument therefore yields that 
\[\dim_{\F_\p}\left(H^0(\cF_{\infty}, \phi[\p^\infty])\otimes_{\cO} \F_{\p}\right)=0.\]
Thus, we have shown that 
\[\lambda_{\p}(\phi) \leq \op{dim}_{\F_\p}\op{Sel}_0(\phi[\p]/\cF).\]

\bibliographystyle{alpha}
\bibliography{references}
\end{document}